\documentclass{amsart}
\usepackage{amsmath,amssymb,graphicx}

\newcounter{sub}
\newenvironment{sub}%
{\begin{list}{(\arabic{sub})}{\usecounter{sub}%
\setlength{\leftmargin}{2em}}}{\end{list}}

\def\ch{\mbox{\rm char }}

\makeatletter
    
    \@addtoreset{equation}{section}
\makeatother

\newtheorem{thm}{Theorem}[section]
\newtheorem{cor}{Corollary}[section]
\newtheorem{prop}{Proposition}[section]
\newtheorem{rem}{Remark}[section]
\newtheorem{exmp}{Example}[section]

\title{Rationality problem of generalized Ch\^atelet surfaces}
\author{Aiichi Yamasaki}

\begin{document}

\maketitle

\begin{abstract}
The surface $z^2=ay^2+P(x), \,
a \in k, \, P(x) \in k[x]$ is not $k$-rational,
if $a \not\in k^2$ and $P(x)$ satisfies some conditions.
This result essentially due to Iskovskih
\cite{Isk67}, \cite{Isk96}, \cite{Isk97},
but his statement is in terms of algebraic geometry,
and not so easy to access for the researchers
of the field extension.
This paper aims to give a formulation accessible more easily.
A necessary and sufficient condition for $k$-rationality
is given in terms of $a$ and $P$,
assuming that $\ch k \not= 2$
and every irreducible component of $P(x)$ is separable over $k$.
\end{abstract}

\section{Introduction}

Iskovskih studied the rationality of conic bundles
and obtained the following result \cite{Isk67}, \cite{Isk96}, \cite{Isk97}.

\begin{thm} (Iskovskih)
Let $X$ be a rational $k$-surface fibrated as a standard conic bundle
$\pi: X \rightarrow \mathbb{P}_k^1$.
If $X$ has at least four degenerate geometric fibres,
then $X$ is not $k$-rational.
\end{thm}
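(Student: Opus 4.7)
The plan is to argue by contradiction using the two-dimensional Sarkisov program adapted to non-algebraically-closed fields, which underlies Iskovskih's work. Suppose $X$ were $k$-rational, i.e., $k$-birational to $\mathbb{P}^2_k$. First I would reduce to the case where $\pi: X \to \mathbb{P}^1_k$ is already relatively minimal among conic bundles: any Galois-stable exceptional divisor on $X$ that is contractible must project to a point of $\mathbb{P}^1_k$ (so it lies in a single geometric fibre), and contracting it leaves the conic bundle structure intact. Such contractions can decrease the number of degenerate geometric fibres, but the hypothesis ``$\geq 4$ degenerate fibres'' on the original standard conic bundle $X$ is already assumed to be the relatively minimal count, so nothing is lost.

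The key input is Iskovskih's classification of $k$-minimal smooth rational surfaces: every such surface is either a del Pezzo of $k$-Picard rank one, or a relatively minimal conic bundle over a conic. Combined with the two-dimensional Sarkisov theorem, any $k$-birational map between two minimal models factors into a sequence of \emph{elementary links}. Within the conic-bundle type the only such links are elementary transformations --- blow up a closed point lying in a smooth fibre, then contract the proper transform of that fibre --- and these preserve the number of degenerate geometric fibres. Hence, if $X$ were $k$-rational, the chain of links would have to contain at least one \emph{inter-type} link exiting the conic-bundle world toward a del Pezzo, and ultimately toward $\mathbb{P}^2_k$, which carries no conic bundle structure of this kind.

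The main obstacle, and the technical heart of Iskovskih's argument, is to rule out such an inter-type link under the hypothesis $N \geq 4$, where $N$ denotes the number of degenerate geometric fibres. This is carried out via a Noether--Fano--Iskovskih inequality: for a hypothetical birational map $f: X \dashrightarrow Y$ to a del Pezzo, resolve $f$ by a smooth $Z$ with maps $p: Z \to X$ and $q: Z \to Y$, pull back an ample generator of $\mathrm{Pic}(Y)$ to $X$, and express it as a combination of $-K_X$, a fibre class, and exceptional divisors supported over base points; comparison of intersection numbers then constrains the multiplicities. For a standard conic bundle one has $K_X^2 = 8 - N$, so $N \geq 4$ gives $K_X^2 \leq 4$, which falls below the admissible range that the Noether--Fano inequality would demand for an inter-type link to be realizable. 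The delicate Galois-equivariant bookkeeping of base-point multiplicities along fibres, and the careful verification that the inequality is \emph{strict} precisely once $N \geq 4$, is the step demanding the most care, and it is where the specific number four enters the proof.
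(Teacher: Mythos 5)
This theorem is not proved in the paper at all: it is quoted as Iskovskih's result with references to \cite{Isk67}, \cite{Isk96}, \cite{Isk97}, and the body of the paper then develops an entirely different, more hands-on toolkit (Manin's criterion via $H^1(\mathfrak{G},Pic(X))$, an intersection-form obstruction for $\mathfrak{G}$-invariant curves, and a reduction to del Pezzo surfaces) which is applied only to the special surfaces $z^2=ay^2+P(x)$. So your sketch cannot be matched against a proof in the paper; what it does match is the strategy of Iskovskih's original articles, namely the classification of $k$-minimal rational surfaces, factorization of birational maps into elementary links, and a Noether--Fano type inequality, with the threshold $K_X^2=8-N\le 4<5$ playing the decisive role. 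As an outline of that route, the skeleton you describe is the right one.

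The genuine gap is that the step carrying all of the content is asserted rather than argued. You write that $K_X^2\le 4$ ``falls below the admissible range that the Noether--Fano inequality would demand,'' but you never set up the inequality, never bound the multiplicities of the (Galois-orbits of) base points, and never run the case analysis over the possible targets of an inter-type link (del Pezzo surfaces of the various degrees, and conic bundles over a different base). This is exactly where the number four enters and where the proof is hard: the cases $K_X^2\le 0$ are comparatively easy, $K_X^2=1,2$ already require separate treatment, and the boundary case $N=4$, $K_X^2=4$ (the Ch\^atelet case) resisted until \cite{BCSS85} and \cite{Isk96}; indeed the cohomological invariant $H^1(\mathfrak{G},Pic(X))$ used elsewhere in this paper \emph{vanishes} in some of these boundary cases, which shows that no soft argument closes them. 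Two smaller points also need care: your reduction to the relatively minimal model must verify that the count of degenerate geometric fibres on the minimal model is still at least four (contracting Galois-stable components of degenerate fibres raises $K^2$ and lowers $N$), and the claim that elementary transformations preserve $N$ should be justified by noting that they preserve $K_X^2$ and that $N=8-K_X^2$ for a standard conic bundle over $\mathbb{P}^1_k$. As written, the proposal is a correct table of contents for Iskovskih's proof, not a proof.
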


The function field of a conic bundle may be a quadratic extension of $k(x,y)$,
the two dimensional rational function field over $k$, defined as
$$z^2=Q(x)y^2+P(x), \quad P,Q \in k(x).$$
So Iskovskih's problem is equivalent with the rationality problem of the above
$k(x,y,z)$.
However it is not clear how the conditions in Iskovskih's theorem
can be restated in terms of $P$ and $Q$.

We need a necessary and suficient condition for $k$-rationality
in terms of $P$ and $Q$.
As a first step,
we shall consider the case $\deg Q=0$,
namely we shall consider the rationality problem of
\begin{equation} \label{cha}
z^2=ay^2+P(x), \quad a \in k, \, P(x) \in k(x),
\end{equation}
and find the condition in $P(x)$ for $k$-rationality.

\begin{rem}
The surface defined by (\ref{cha}) is called a Ch\^atelet surface
when $\deg P=3$ or $4$,
which was first studied by Ch\^atelet \cite{Cha59}.
The surface may be called a generalized Ch\^atelet surface
when $\deg P \geq 5$.
\end{rem}

\subsection{Main result.} \label{main}
In this paper, we prove that $k(x,y,z)$ defined by (\ref{cha})
is not $k$-rational under the following (1) $\sim$ (5).
\begin{sub}
\item[(1)]
$a \not\in k^2$.
\item[(2)]
$P(x) \in k[x]$, $\deg P \geq 3$
and the irreducible decomposition of $P$ is square-free,
namely $P(x)$ is a product of mutually different irreducible polynomials.
\item[(3)]
Let $l$ be the splitting field of $P(x)$,
then $k(\sqrt{a}) \subset l$.
\item[(4)]
Every irreducible component of $P(x)$ is irreducible
also over $k(\sqrt{a})$.
\item[(5)]
$\ch k \not=2$,
and every irreducible component of $P(x)$ is reparable over $k$.
\end{sub}
Among these conditions, only (5) is an assumption,
and other (1) $\sim$ (4) are merely exclusions of known cases,
as explained below.

\begin{sub}
\item[(1)]
If $a \in k^2$, then $k(x,y,z)$ is $k$-rational.
$a \in k^2$is equivalent to $\sqrt{a} \in k$,
so putting $u=z+\sqrt{a}y$ and $v=z-\sqrt{a}y$,
(\ref{cha}) becomes $uv=P(x)$, thus $k(x,y,z)=k(x,u,v)=k(x,u)$
since $v=\frac{P(x)}{u} \in k(x,u)$.
\item[(2)]
Evidently the rationality of (\ref{cha}) depends on $k^2$-coset of $a$
and $k(x)^2$-coset of $P(x)$.
So without loss of generality,
we can assume that $P(x) \in k[x]$
and the irreducible decomposition of $P$ is square-free.

When $\deg P=1$,
(\ref{cha}) is written as $z^2=ay^2+x$
so $k(x,y,z)=k(y,z)$ is $k$-rational..

When $\deg P=0$ or $2$, the problem is reduced to a problem of quadratic forms,
and we have the following result.

When $\deg P=0$,
(\ref{cha}) is written as $z^2=ay^2+b$,
and $k(x,y,z)$ is $k$-rational if and only if $b \in k^2-ak^2$.

When $\deg P=2$ and $\ch k \not= 2$,
(\ref{cha}) is written as $z^2=ay^2+bx^2+c$,
and if $c \not=0$, $k(x,y,z)$ is $k$-rational if and only if $c \in k^2-ak^2-bk^2$.
(If $c=0$, $k(x,y,z)$ is $k$-rational if and only if $b \in k^2-ak^2$).

\item[(3)]
If $l \cap k(\sqrt{a})=k$,
$k(x,y,z)$ is not $k$-rational
as Manin proved in \cite{Man67},
using the criterion I explained later in this section.

\item[(4)]
Suppose that some irreducible component $P_1$ of $P$ is
reducible over $k(\sqrt{a})$, and one of the irreducible
component of $P_1$ over $k(\sqrt{a})$ is $A(x)+\sqrt{a}B(x)$ with
$A(x), B(x) \in k[x]$.
Then its conjugate is $A(x)-\sqrt{a}B(x)$,
so we have $P_1(x)=A(x)^2-aB(x)^2$.
 Putting $z=A(x)z^\prime+aB(x)y^\prime$ and $y=B(x)z^\prime+A(x)y^\prime$,
 we have $z^2-ay^2=P_1(x)(z^{\prime^2}-ay^{\prime2})$,
 so that $z^{\prime2}-ay^{\prime2}=P(x)/P_1(x)$.
 Since $k(x,y,z)=k(x,y^\prime,z^\prime)$,
 the rationality of $k(x,y,z)$ does not change by replacing $P$ with $P/P_1$.
In other words, omission of $P_1$ from the irreducible decomposition of $P$
does not change the rationality problem.
\end{sub}

\subsection{Idea of the proof}

We shall explain the details in later sections,
and here state only the outline.

Let $l$ be the splitting field of $P(x)$,
then $\sqrt{a} \in l$ because of the condition 3,
so $l(x,y,z)$ is $l$-rational and $l(x,y,z)=l(x,u)$
where $u=z+\sqrt{a}y$.
$L$ is a Galois extension of $k$.
Put $\mathfrak{G}=Gal(l/k)$ and $N=Gal(l/k\big(\sqrt{a})\big)$.
$\mathfrak{G}$ acts on $x$ trivially,
and $N$ acts on $u$ trivially,
while $\sigma \in \mathfrak{G} \setminus N$ acts as
$u \mapsto z-\sqrt{a}y=\frac{P(x)}{u}$.

The automorphism $T:(x,u) \mapsto (x,\frac{P(x)}{u})$ of $k(x,u)$
induces a $\overline{k}$-birational transformation of $\mathbb{P}^1 \times \mathbb{P}^1$.
After successive blowings-up at fundamental points of $T$,
it becomes $\overline{k}$-biregular on the obtained surface $X$.

Then $\mathfrak{G}$ acts on $X$ homeomorphically in Zariski topology,
and induces a permutation of irreducible curves.
Thus the divisor group $Div(X)$ becomes a permutation $\mathfrak{G}$-module.
Since the principal divisor group is kept invariant by the action of $\mathfrak{G}$,
Picard group $Pic(X)$ is also a $\mathfrak{G}$-module.

From the structure of $Pic(X)$ as a $\mathfrak{G}$-module,
we can derive the $k$-irrationality of $k(x,y,z)$.
We use three criteria I, II, III stated below.
The proof is overlapping, and actually only II and III are sufficient
for the proof of the main result,
but we shall explain all of I, II, III.

\noindent
I. The first Galois cohomology $H^1(\mathfrak{G},Pic(X))$ is
$k$-birational invariant, as Manin pointed out in \cite{Man67}.
Especially, if $k(x,y,z)$ is $k$-rational
(namely if $X$ is $k$-birational with $\mathbb{P}^1 \times \mathbb{P}^1$),
we should have $H^1(\mathfrak{G},Pic(X))=0$.
In other words,
$H^1(\mathfrak{G},Pic(X)) \not=0$ is a criterion for the $k$-irrationality of $k(x,y,z)$.

On the other hand, we can prove
\begin{thm} \label{A}
$H^1(\mathfrak{G},Pic(X))=(\mathbb{Z}/2\mathbb{Z})^j$
where $j$ is as follows.
Let $r^\prime$ be the number of irreducible components of $P(x)$.
When $\deg P$ is odd, then $j=r^\prime-1$.
When $\deg P$ is even,
if every irreducible component is of even degree,
then $j=r^\prime-1$,
while if some irreducible component is of odd degree,
then $j=r^\prime-2$.
\end{thm}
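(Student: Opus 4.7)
The plan is to make $\mathrm{Pic}(X_{\bar k})$ fully explicit as a $\mathfrak G$-lattice and extract $H^1$ via short exact sequences whose outer terms are computed using Shapiro's lemma.

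After the blow-ups of $\mathbb{P}^1_x\times\mathbb{P}^1_u$ that resolve the involution $T\colon(x,u)\mapsto(x,P(x)/u)$, $X$ carries the structure of a conic bundle over $\mathbb{P}^1_x$; its reducible fibres $D_\alpha^++D_\alpha^-\equiv F$ sit over the $r=\deg P$ geometric zeros $\alpha$ of $P$, with components the two rulings $z\pm\sqrt{a}\,y=0$. A natural $\mathfrak G$-stable generating set of $\mathrm{Pic}(X_{\bar k})$ comprises the fibre class $F$, a horizontal class $H$, and the rulings $D_\alpha^\pm$. The Galois action is read off immediately: $F$ is fixed; a $\sigma\in N=\mathrm{Gal}(l/k(\sqrt{a}))$ sends $D_\alpha^+\mapsto D_{\sigma\alpha}^+$; a $\sigma\in\mathfrak G\setminus N$, applying $T$, sends $D_\alpha^+\mapsto D_{\sigma\alpha}^-=F-D_{\sigma\alpha}^+$; the action on $H$ is then determined by the intersection numbers $H\cdot D_\alpha^\pm\in\{0,1\}$. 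Condition~(4) is exactly the statement that the stabiliser $H_j\subset\mathfrak G$ of any root of $P_j$ is never contained in $N$, so the sign character $\chi\colon\mathfrak G\to\{\pm1\}$ with kernel $N$ restricts non-trivially to every $H_j$.

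I would then introduce the sublattice $V\subset\mathrm{Pic}(X_{\bar k})$ generated by $F$ and all $D_\alpha^\pm$, and exploit the two short exact sequences
\[
0\longrightarrow\mathbb{Z}F\longrightarrow V\longrightarrow Q\longrightarrow 0, \qquad 0\longrightarrow V\longrightarrow\mathrm{Pic}(X_{\bar k})\longrightarrow\mathbb{Z}\longrightarrow 0,
\]
where $Q=\bigoplus_\alpha\mathbb{Z}\,e_\alpha$ is the signed-permutation module $\sigma\cdot e_\alpha=\chi(\sigma)\,e_{\sigma\alpha}$, and the quotient $\mathbb{Z}$ in the second sequence is the trivial module generated by the image of $H$. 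Splitting $Q=\bigoplus_{j=1}^{r'}\mathrm{Ind}_{H_j}^{\mathfrak G}\mathbb{Z}_\chi$ along the irreducible factors of $P$ and applying Shapiro's lemma together with the inflation-restriction sequence, the non-triviality of $\chi|_{H_j}$ yields $H^1(H_j,\mathbb{Z}_\chi)\cong H^1(\mathbb{Z}/2\mathbb{Z},\mathbb{Z}^-)=\mathbb{Z}/2\mathbb{Z}$, whence $H^1(\mathfrak G,Q)=(\mathbb{Z}/2\mathbb{Z})^{r'}$. Combined with $H^1(\mathfrak G,\mathbb{Z}F)=H^1(\mathfrak G,\mathbb{Z})=0$, the long exact sequences realise $H^1(\mathfrak G,\mathrm{Pic}(X_{\bar k}))$ as a subquotient of $(\mathbb{Z}/2\mathbb{Z})^{r'}$, forcing the answer into the claimed form $(\mathbb{Z}/2\mathbb{Z})^j$.

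The main obstacle is the bookkeeping of the two connecting maps: $H^1(\mathfrak G,Q)\to H^2(\mathfrak G,\mathbb{Z}F)$ from the first sequence, and $\mathbb{Z}=H^0(\mathfrak G,\mathbb{Z})\to H^1(\mathfrak G,V)$ from the second. The first contributes at most one relation, controlled by the parity of $r=\deg P$: it detects whether $\sum_\alpha D_\alpha^+$ is already $\mathfrak G$-invariant modulo $F$. The second contributes a further relation precisely when $H$ itself fails to be $\mathfrak G$-invariant, which happens exactly when $\deg P$ is even and some $d_j=\deg P_j$ is odd. Putting the two parities together produces the stated case split: one relation appears (so $j=r'-1$) when $\deg P$ is odd, or when $\deg P$ is even with every $d_j$ even; two independent relations appear (so $j=r'-2$) when $\deg P$ is even with at least one $d_j$ odd.
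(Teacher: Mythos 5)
Your general skeleton (identify the vertical classes, form the signed permutation module $Q$, apply Shapiro's lemma using condition (4) to get $H^1(\mathfrak G,Q)=(\mathbb Z/2\mathbb Z)^{r'}$, then squeeze $H^1(\mathfrak G,\mathrm{Pic})$ between two exact sequences) is a legitimate route, genuinely different from the paper, which instead computes $\widehat H^{-1}$ of an explicit direct summand $M_1$ of $\mathrm{Pic}(X)$ by writing out the matrices $g_\sigma$ and then passes to $H^1$ by duality. But as written your argument has real gaps, and they sit exactly where the theorem's case distinction is decided. First, your geometric model is wrong or at least unjustified: for the paper's $X$ the Picard rank is $2r+2$, so $F$, $H$, $D_\alpha^\pm$ do not generate; and if you instead pass to a relatively minimal conic bundle model (legitimate, since the discrepancy is a permutation summand), then when $\deg P$ is odd there is an additional degenerate fibre over $x=\infty$, whose two components are again interchanged by $\mathfrak G\setminus N$. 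Your list ``reducible fibres over the $r$ zeros of $P$'' misses it, and this extra fibre (contributing an extra $\mathbb Z_\chi$ to $Q$, hence an extra $\mathbb Z/2$ to $H^1(\mathfrak G,Q)$) is precisely the source of the odd/even dichotomy in the statement.

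Second, the two connecting maps, which carry the whole content, are asserted rather than computed, and the criteria you give for them are false. The restriction of $\partial\colon H^1(\mathfrak G,Q)\to H^2(\mathfrak G,\mathbb Z)$ to the $j$-th factor is essentially the corestriction of $\chi|_{H_j}$, which is not ``controlled by the parity of $r$'': it is nonzero for an irreducible cubic with $\mathfrak G=S_3$, but zero for $P=(x^2-d_1)(x^2-d_2)$, $a=d_1d_2$, $\mathfrak G=(\mathbb Z/2\mathbb Z)^2$ (a direct cocycle computation gives $H^1(\mathfrak G,V)=(\mathbb Z/2\mathbb Z)^2$ there), and a priori distinct factors could even hit independent characters, so ``at most one relation'' itself needs proof. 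Likewise your rule that $\delta\colon\mathbb Z\to H^1(\mathfrak G,V)$ is nonzero ``exactly when $\deg P$ is even and some $d_j$ is odd'' fails in both directions: in the biquadratic example above ($\deg P$ even, all $d_j$ even) the theorem's answer $\mathbb Z/2\mathbb Z$ forces $\delta\neq0$, and for $\deg P$ odd (with the correct model including the fibre at infinity, e.g.\ the irreducible cubic) $\delta$ must again be nonzero since $\partial$ can kill at most one of the two $\mathbb Z/2\mathbb Z$'s. Moreover $\delta$ depends on the Galois action on the horizontal class, i.e.\ on whether some $\mathfrak G$-invariant class has odd intersection with $F$, which you never determine (it is not a property of a chosen $H$). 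So the final tally you state coincides with the theorem, but it is not derived: to complete this approach you must fix the model (include the degenerate fibre at infinity for odd $r$, or work with the paper's full basis) and actually evaluate both connecting homomorphisms.
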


Theorem \ref{A} implies that $k(x,y,z)$ is not $k$-rational
except when $P(x)$ is irreducible,
or a product of two irreducible polynomials of odd degree.

\noindent
II. We shall reach a contradiction by the calculation of the intersection form.

If $X$ is $k$-birational with $\mathbb{P}^1 \times \mathbb{P}^1$,
there exist two families of $\mathfrak{G}$-invariant irreducible curves
$\{C_a\}$ and $\{C_a^\prime\}$ on $X$, parametrized by elements of $k$.

After successive blowings-up at fundamental points of $\mathbb{P}^1 \times \mathbb{P}^1$
and $X$ respectively,
the obtained surface $Z$ and $Z^\prime$ becomes $k$-biregular.
Then, except finite number of elements of $k$,
$C_a$ and $C_a^\prime$ (denoted with $C$ for simplicity)
should satisfy $C \cdot C=0$ and $C \cdot \Omega=-2$
on $Z^\prime$, where $\Omega$ is the canonical divisor.

Since by a blowing-up $E$,
$C \cdot C$ decreases by $(C \cdot E)^2$
and $C \cdot \Omega$ increases by $C \cdot E$,
we should have
\begin{equation} \label{1_2}
C \cdot C=\sum_j m_j^2, \quad
C \cdot \Omega=-2-\sum_j m_j
\end{equation}
on $X$, where $m_j=C \cdot E_j$.

On the other hand, we can prove
\begin{thm} \label{B}
If $\deg P \geq 7$,
we can construcat a non-singular projective surface $X^\prime$
which is $l$-birational with $X$,
such that for any $\{m_j\}$ any
$\mathfrak{G}$-invariant irreducible curve $C$ other than
$x=$const. can not satisfy (\ref{1_2}).
\end{thm}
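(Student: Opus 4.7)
The plan is to take $X'$ to be a relatively minimal $l$-model of the conic bundle $\pi: X \to \mathbb{P}^1_x$. Starting from $X$, which already resolves the base locus of $T$, I would contract over $\bar l$ one $(-1)$-component from each reducible fibre of $\pi$, choosing these contractions so that the result descends to $l$; this is possible since $l$ is the splitting field of $P$ and contains $\sqrt{a}$, so both components of every reducible fibre are $l$-rational. The resulting $X'$ is a Hirzebruch surface $\mathbb{F}_n$ over $\bar l$, with $\mathrm{Pic}(X'_{\bar l})$ freely generated by a fibre $F$ and a section $S_0$ satisfying $F^2 = 0$, $F \cdot S_0 = 1$, $S_0^2 = -n$ and $K_{X'} = -2S_0 - (n+2)F$. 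The $\mathfrak{G}$-action on the contracted exceptional curves $\{E_i'\}$, which sit over the roots of $P$, is governed by the Galois action on those roots.

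Suppose $C$ is a $\mathfrak{G}$-invariant irreducible curve on $X'$ other than a vertical fibre, with class $[C] = \alpha F + \beta S_0$ and $\beta \geq 1$. Combining (\ref{1_2}) with the blow-up formulas stated in the paper yields
\[
2\alpha\beta - n\beta^2 = \sum_j m_j^2 + \sum_i (m_i')^2, \qquad -2\alpha + (n-2)\beta = -2 - \sum_j m_j - \sum_i m_i',
\]
where $m_i' = C \cdot E_i' \geq 0$ and $m_j \geq 0$ are the multiplicities of $C$ at the fundamental points of $X \dashrightarrow Z'$. Eliminating $\alpha$ reduces these to the single quadratic identity
\[
\beta \sum (m_j + m_i') - \sum (m_j^2 + (m_i')^2) = 2\beta(\beta - 1).
\]
The $\mathfrak{G}$-invariance of $C$ forces the $m_j$ and $m_i'$ to be constant on $\mathfrak{G}$-orbits, so the left-hand side decomposes into blocks indexed by orbit sizes.

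For $\deg P \geq 7$ one uses Theorem \ref{A} to restrict to $P$ irreducible or a product of two irreducible factors of odd degree. In both cases the orbit sizes on $\{E_i'\}$ coming from $P$ are large enough, combined with the limited contribution from $\{E_j\}$, that the identity above admits no non-negative integer solution with $\beta \geq 1$, yielding the desired contradiction. The main obstacle will be (i) the equivariant construction of $X'$ and the precise description of the $\mathfrak{G}$-action on $\mathrm{Pic}(X'_{\bar l})$, including the determination of $n$ in terms of $\deg P$ and the factorisation of $P$, and (ii) the case analysis pinning down $\deg P \geq 7$ as the sharp threshold, which is delicate when $P$ splits as a product of two odd-degree factors and the Galois orbits can include singletons.
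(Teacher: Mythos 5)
Your overall strategy --- pass to a ruled model, write $[C]$ in terms of a fibre and a section, and turn $C\cdot C=\sum m_j^2$, $C\cdot\Omega=-2-\sum m_j$ into a Diophantine constraint --- is in the right spirit, and your eliminated identity $\beta\sum(m_j+m_i')-\sum(m_j^2+(m_i')^2)=2\beta(\beta-1)$ is algebraically correct. But there are two genuine gaps. First, the surface you propose does not exist equivariantly: for $\sigma\in\mathfrak{G}\setminus N$ the twisted action $T_\sigma\circ\widetilde\sigma$ swaps the two components $E_i$ and $(x=c_i)$ of each degenerate fibre (this is exactly what conditions (3)--(4) of the paper guarantee, and is why the conic bundle is \emph{standard}), so no $\mathfrak{G}$-stable choice of one $(-1)$-component per fibre exists and $\mathfrak{G}$ does not act biregularly on your $\mathbb{F}_n$. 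The paper instead keeps all the $E_i$ and only performs elementary transformations over $x=\infty$, obtaining a surface $Y$ on which $\mathfrak{G}$ does act and whose invariant Picard group has rank $2$, spanned by the fibre class and $\Omega$ with $\Omega\cdot\Omega=8-r$.

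Second, and more seriously, your appeal to ``multiplicities constant on $\mathfrak{G}$-orbits'' and to orbit sizes is not the mechanism that kills the equation, and by itself it does not: taking all $m_i'=0$ (constant on every orbit) and $m_j\in\{0,\beta\}$ satisfies your identity for any $\beta$, so no contradiction follows. The decisive input is sharper: since $C$ is $\mathfrak{G}$-invariant and the two components of the fibre over $c_i$ are swapped, $C\cdot E_i=C\cdot(x=c_i)=\tfrac12(C\cdot F)=\beta/2$ for \emph{every} $i$; hence $\beta$ is even and the fibre terms contribute exactly $r\beta^2/4$ to the left side of your identity, while $0\le m_j\le\beta$ (proved by a monotonicity argument along infinitely near points) makes the remaining terms nonnegative. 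This forces $r\beta^2/4\le 2\beta(\beta-1)<2\beta^2$, i.e.\ $r<8$ --- equivalently the paper's inequality $\omega m^2\ge 4m$ with $\omega=8-r\le0$ after the reduction of odd $\deg P\ge7$ to even $r\ge8$. No case division according to the factorisation of $P$ is needed, so the detour through Theorem \ref{A} and the ``two odd factors'' analysis you anticipate is unnecessary once this invariance constraint is in place.
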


Theorem \ref{B} impies that $k(x,y,z)$ is not $k$-rational when $\deg P \geq 7$.

\noindent
III. Reduction to a del Pezzo surface.

A del Pezzo surface $S$ is biregular with  some successive blowings-up of the projective plane $\mathbb{P}^2$.

\begin{thm} \label{C}
When $3 \leq \deg P \leq 6$,
if $k(x,y,z)$ is $k$-ratioanl, then we can construct a del Pezzo surface $X^{\prime\prime}$
which is $l$-birational with $X$.
\end{thm}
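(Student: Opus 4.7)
The plan is to start from the $l$-biregular model $X$, equipped with its conic bundle structure $\pi:X\to\mathbb{P}^1$ induced by the $x$-coordinate (whose degenerate fibres sit over the roots of $P$ and possibly over $\infty$), and to perform a sequence of $\mathfrak{G}$-equivariant elementary transformations and contractions of $(-1)$-curves until reaching a $\mathfrak{G}$-minimal surface $X''$. The $k$-rationality hypothesis combined with Iskovskih's theorem (Theorem 1.1) will then force $-K_{X''}$ to be ample.

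First I would catalogue the $(-1)$-curves of $X$ over $l$. Over each root $\alpha_i$ of $P$ the reducible fibre of $\pi$ consists of two lines $F_i^+, F_i^-$ meeting at one point; each is a $(-1)$-curve on $X$, and any element of $\mathfrak{G}\setminus N$ swaps the pair, while $\mathfrak{G}/N$ permutes the index $i$ as it does the roots of the corresponding irreducible factor of $P$. Additional $(-1)$-curves arise from the exceptional divisors of the blowups resolving $T$ over $(\infty,\infty)$, carrying their own $\mathfrak{G}$-action. This yields an explicit description of the $\mathfrak{G}$-orbits on the set of $(-1)$-curves of $X$.

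Because $k(x,y,z)$ is assumed $k$-rational, the $\mathfrak{G}$-equivariant minimal model of $X$ cannot be a conic bundle with $\geq 4$ degenerate fibres, by Iskovskih's theorem. Hence some $\mathfrak{G}$-orbit of $(-1)$-curves on $X$ must be simultaneously contractible, after possibly first separating intersecting members of the orbit by equivariant blowups at their common points. Contracting such an orbit yields a smooth $\mathfrak{G}$-surface with strictly fewer degenerate fibres, and iterating the process gives a $\mathfrak{G}$-minimal model $X''$. For $3\leq\deg P\leq 6$, the initial value $K_X^2=8-\nu$, where $\nu$ is the number of blowups used to construct $X$, lies in a range where the process terminates with $K_{X''}^2$ in the del Pezzo range $1\leq K^2\leq 9$; ampleness of $-K_{X''}$ then follows from $\mathfrak{G}$-minimality together with a finite case-check that every irreducible curve on $X''$ has positive anticanonical degree.

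The main obstacle will be the orbit-bookkeeping in the contraction step: a $\mathfrak{G}$-orbit of $(-1)$-curves need not be pairwise disjoint, and in particular the pair $F_i^+, F_i^-$ inside a single degenerate fibre cannot be simultaneously blown down, so one must intersperse equivariant elementary transformations of the conic bundle to realign the orbits into contractible configurations before each contraction. Tracking the evolution of $K^2$ and of the conic bundle structure through these adjustments, while verifying that the process indeed terminates inside the del Pezzo range, is the principal technical difficulty, and it is precisely this bookkeeping that makes the hypothesis $\deg P\leq 6$ necessary.
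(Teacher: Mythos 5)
Your proposal has two genuine gaps. First, it is circular relative to the purpose of this theorem: you invoke Iskovskih's criterion (Theorem 1.1) to rule out conic-bundle minimal models, but Theorem \ref{C} is a step in an independent, self-contained proof of exactly that irrationality statement; moreover, if Iskovskih's theorem were available as a black box, the $k$-rationality hypothesis would already be contradicted outright (after the reduction to even degree the bundle has at least four degenerate geometric fibres), and no del Pezzo surface would need to be constructed at all. Second, and more seriously as a matter of geometry, the equivariant contraction step fails. Under conditions (3) and (4) of \S\ref{main}, the two components $F_i^{+},F_i^{-}$ of each degenerate fibre are swapped by every $\sigma\in\mathfrak{G}\setminus N$ and meet in a point, and $N$ acts transitively on the roots of each irreducible factor, so no $\mathfrak{G}$-orbit of $(-1)$-curves lying in the fibres is pairwise disjoint; the surface is already relatively $\mathfrak{G}$-minimal as a conic bundle, and the equivariant minimal model program terminates there rather than at a del Pezzo surface. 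Your proposed fix --- ``separating intersecting members of the orbit by equivariant blowups'' --- does not help, because blowing up the intersection point of two $(-1)$-curves turns them into $(-2)$-curves, which cannot be contracted. Finally, having $K^2$ land in the range $1\le K^2\le 9$ does not by itself make the anticanonical class ample.

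The paper's actual argument is different in kind: the rationality hypothesis is used constructively to produce a $\mathfrak{G}$-invariant irreducible curve $C$ (the image of a line $t=a$ under the assumed $k$-birational map) which, after finitely many blowups, satisfies $C\cdot C=0$ and $C\cdot\Omega=-2$, so that the multiplicities $m_j=C\cdot E_j^\prime$ obey the constraints (\ref{meq}). One then performs elementary transformations of the conic bundle (blow up a point where $m_j>m$, blow down the fibre through it); these preserve $\Omega\cdot\Omega=8-r$ and the number of degenerate fibres but replace $m_j$ by $2m-m_j\le m$. The resulting bound $0\le m_j^\prime\le m$ forces $\nu^\prime\le -1$, and that inequality is precisely what is used to verify $\Omega\cdot\Gamma<0$ for every irreducible curve $\Gamma$ on the transformed surface $Y_C$. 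Your proposal never extracts this curve $C$ from the rationality hypothesis, and without it there is no mechanism to make the anticanonical class ample.
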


Thus, $X^{\prime\prime}$ is biregular with some successive blowings-up of $\mathbb{P}^2.$
From this result, we can deduce a contradiction
and complete the proof of $k$-irrationality of $k(x,y,z)$.

\subsection{Plan of this paper}

We shall prove Theorem \ref{A}, \ref{B}, \ref{C} and the required contradiction
in \S3, \S5, \S7 respectively.
Necessary background in algebraic geometry is explained in
\S2, \S4, \S6 respectively.

The coefficient field $k$ is algebraically closed in \S2, \S4 and \S6,
while $k$ is algebraically non-closed in \S3, \S5 and \S7.
The results in \S2, \S4, \S6 are applied for $\overline{k}$,
the algebraic closure of the field $k$
in \S3, \S5 and \S7.

\begin{rem}
The case when $\deg P=3$ and $P(x)$ is irreducible is a typical example of a surface
which is not $k$-rational but stably $k$-rational.
(Beauville, Colliot-Th\'el\`ene, Sansuc, Swinnerton-Dyer \cite{BCSS85}).
\end{rem}

\begin{rem}
When $\ch k=0$,
then $k$ is necessarily an infinite field.
When $\ch k>0$, $k$ may be a finite field.
Even then, without loss of generality,
we can assume that $|k|$ is sufficiently large,
by the following reason.

Let $k$ be a finite field and suppose that the conditions
(1) $\sim$ (5) are satisfied.
Then for any $N>0$, there exists a finite extension $k^\prime \supset k$
such that $|k^\prime|>N$ and the conditions (1) $\sim$ (5)
are satisfied even if we replace $k$ by $k^\prime$.
Note that $k^\prime$-irrationality of $k^\prime(x,y,z)$
implies $k$-irrationality of $k(x,y,z)$.

This means that if a proposition holds except finite number
of elements of $\overline{k}$,
then it holds for some element of $k$.
\end{rem}

\section{Birational mapping and blowing-up.}

In this section, we shall state some results in algebraic geometry
without proof, necessary for the discussions in \S3.
For more details, see for instance Hartshorn \cite{Har77}, especially chap. 5.

\subsection{Birational mapping} \label{birational}

Let $X$ and $X^\prime$ be projective non-singular surfaces,
which are mutually birational by $T: X \rightarrow X^\prime$.
$T$ can not be defined for finite number of points
(which are called fundamental points of $T$)
because the both of numerator and denominator of $T$ becomes zero.
$T$ is not injective on finite number of irreducible curves
(which are called exceptional curves of $T$),
and $T$ maps every irreducible branch of exceptional curves to a point of $X^\prime$,
which is a fundamental point of $T^{-1}$.

The complement $O(X)$ of all fundamental points and all exceptional curves
is a Zariski open set of $X$,
and $T$ maps $O(X)$ biregularly to $O(X^\prime)$ (defined similarly for $T^{-1}$).

\begin{thm}
(Proof is omitted, see Hartshorn \cite{Har77} Chap. 5)

Every birational mapping $T: X \mapsto X^\prime$
becomes biregular after finite steps of blowing-up at fundamental points
of $T$ and $T^\prime$ respectively.
(This is valid only for surfaces, and it is not true for higher dimensional varieties).
\end{thm}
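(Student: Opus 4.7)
The plan is to split the result into two classical steps: (I) eliminate the indeterminacy of $T$ by blowing up on the source side, producing a birational morphism from a blown-up surface $\tilde X$ down to $X^\prime$; then (II) apply the structure theorem for birational morphisms of smooth projective surfaces to factor this morphism as a further sequence of blowups, now viewed from the $X^\prime$ side. After the two stages, $\tilde X$ is obtained from $X$ by blowing up fundamental points of $T$ (together with their infinitely near points), and simultaneously obtained from $X^\prime$ by blowing up fundamental points of $T^{-1}$, so that $T$ becomes biregular on $\tilde X$.

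For step (I), I would work locally near a fundamental point $p$ of $T$. After embedding $X^\prime \hookrightarrow \mathbb{P}^N$, the map $T$ is described by a linear system $|\mathcal{L}|$ on $X$ with $p$ in its base locus. Blowing up $p$ to get $\sigma_1: X_1 \to X$ with exceptional divisor $E_1$, the pullback $\sigma_1^\ast|\mathcal{L}|$ decomposes as $m E_1 + |\mathcal{L}_1|$, where $m$ is the multiplicity of the base ideal at $p$. The new base locus has strictly smaller total length, which one can see either via the Noetherian property of the local ring or via the valuative criterion of properness applied to discrete valuations of the function field of $X$ centred at $p$. Iterating, the indeterminacy is resolved in finitely many blowups, yielding a birational morphism $\pi: \tilde X \to X^\prime$ with $\tilde X \to X$ a composition of blowups at fundamental points, and infinitely near fundamental points, of $T$.

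For step (II), I would invoke Castelnuovo's contraction criterion: on a smooth projective surface, any smooth rational curve $E$ with $E \cdot E = -1$ can be contracted to a smooth point, and conversely the exceptional divisor of the blowup of a smooth point on a smooth surface is precisely such a $(-1)$-curve. If $\pi: \tilde X \to X^\prime$ is birational but not an isomorphism, Zariski's main theorem forces some irreducible curve to be contracted to a point; combining the adjunction formula with the fact that the fibre of $\pi$ over such a point is connected of arithmetic genus zero shows that at least one contracted component is a $(-1)$-curve. Contracting it via Castelnuovo factors $\pi$ through a smaller blowup and reduces the Picard rank by one. The process terminates because the Picard rank is finite, and it expresses $\pi$ as a sequence of blowups whose first centres are precisely the fundamental points of $T^{-1}$ on $X^\prime$.

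The principal obstacle is step (II), specifically Castelnuovo's criterion itself, whose construction of the contraction morphism requires a nontrivial cohomological argument (one exhibits a suitable very ample divisor $D$ on $\tilde X$ with $D \cdot E = 0$ inducing the contraction). This is exactly where the two-dimensional hypothesis is used: in dimension $\geq 3$ the exceptional locus of a birational morphism can fail to be of pure codimension one, $(-1)$-curves are replaced by more complicated divisorial and small contractions, and no analogous factorization into blowups of closed points exists. This accounts for the parenthetical remark at the end of the statement.
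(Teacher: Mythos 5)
Your proposal is correct and follows the standard two-step argument (elimination of indeterminacy by blowing up base points of the defining linear system, then factorization of the resulting birational morphism into point blowups via Castelnuovo's criterion), which is precisely the proof in Hartshorne, Chapter V, that the paper cites in lieu of giving its own. Nothing further is needed.
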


A concrete example of such blowings-up is given in the discussion in the subsection \ref{biregularization}.
 
\subsection{Blowing-up}

Let $X$ be a projective non-singular surface, and $P$ be a point on $X$.
Then, there exists uniquely (modulo biregularity) a projective non-singular surface
$\widetilde X$ which satisfies the followings.
$\widetilde X$ is called the blowing-up of $X$ at $P$.

$X$ and $X^\prime$ are mutually birational by $\pi: \widetilde X \mapsto X$,
and
\begin{sub}
\item[(1)]
$\pi$ is regular and has no fundamental point.
$\pi$ has a unique exceptional curve $E_p$,
which is biregular to the projective line $\mathbb{P}^1$,
and $\pi$ maps $E_P$ to $P$.
\item[(2)]
$\pi^{-1}$ has a unique fundamental point $P$
and has no exceptional curve.
\end{sub}
In other words, $X \setminus \{P\}$
and $X^\prime \setminus E_P$ are mapped biregularly
and $\pi$ maps $E_P$ to $P$
while $\pi^{-1}$ is not defined at $P$.

Roughly speaking, $\widetilde X$ is the dilation of a point $P$ to a line $E_P$ in $\widetilde X$.
In the tangent plane of $X$ at $P$,
the direction ratioes of tangent vectors correspond to
points on $E_P$.
Thus $E_P$ is the set of direction ratioes of tangent vectors at $P$.

\subsection{Div($X$) and Pic($X$).}

Let $X$ be a projective non-singular surface.
The divisor group $Div(X)$ is defined as the free $\mathbb{Z}$-module
with all irreducible curves on $X$ as basis.
Every irreducible curve $C$ on $X$ induces a valuation $v_C$
on the function field $k(X)$,
and for $f \in k(X)$, the divisor $\sum v_C(f)C$
is called the principal divisor of $f$.

When $f$ runs over $k(X)$,
the principal divisors form a subgroup of $Div(X)$,
which are called the principal divisor group.
It is isomorphic to $k(X)^\times/k^\times$.

The factor group of $Div(X)$ by the principal divisor group is called
the divisor class group or Picard group and denoted with $Pic(X)$. 

\begin{rem}
In more general setting,
the definition of Picard group is more complicated,
but for a projective non-singular surface,
it is nothing but the divisor class group.
\end{rem}

Let $\widetilde X$ be the blowing-up of $X$ at $P$.
Let $C$ be an irreducible curve on $X$.
If $C$ does not pass through $P$,
then $\widetilde C:=\pi^{-1}(C)$
is an irreducible curve on $X$.
If $C$ passes through $P$,
let $\widetilde C$ be the Zariski closure of
$\pi^{-1}(C \setminus \{P\})$
in $\widetilde X$,
then $\widetilde C$ is an irreducible curve on $\widetilde X$.
Besides $\widetilde C$, the only one irreducible curve
on $\widetilde X$ is $E_P$.
So idetifying $C$ and $\widetilde C$,
we have $Div(\widetilde X)=Div(X) \oplus \mathbb{Z}$,
where $\mathbb{Z}$-part is the free $\mathbb{Z}$-module
with $E_P$ as the base.

Since $X$ and $\widetilde X$ are birational,
the function fields are the same,
$k(\widetilde X)=k(X)$.
Taking the factor group by the common principal divisor group,
we have $Pic(\widetilde X) \simeq Pic(X) \oplus \mathbb{Z}$,
where $\mathbb{Z}$-part is the free $\mathbb{Z}$-module
with $E_P$ as the base.

We shall give the isomorphism more explicitly in the next section,
using the intersection forms.

\subsection{Intersection form.} \label{intersection}

\begin{thm}
(Proof is omitted, see Hartshorn \cite{Har77} Chap. 5)

On $Div(X) \times Div(X)$,
there exists uniquely a symmetric $\mathbb{Z}$-bilinear form
$D_1 \cdot D_2$ satisfying the following conditions.
It is colled the intersection form.
\begin{sub}
\item[(1)]
If two irreducible curves $C_1$ and $C_2$ do not intersect on $X$,
then $C_1 \cdot C_2 = 0$.
\item[(2)]
If $C_1$ and $C_2$ intersects transversally at $n$ points,
then $C_1 \cdot C_2=n$.
Here ``intersects transversally at $P$'' 
means that both $C_1$ and $C_2$ are non-singular at $P$,
and tangent vectors of $C_1$ and $C_2$ at $P$ are linearly independent.
\item[(3)]
If $D$ is a principal divisor,
then $D \cdot D^\prime=0$ for all $D^\prime \in Div(X)$.
So that the intersection form is defined on $Pic(X) \times Pic(X)$.
\end{sub}
\end{thm}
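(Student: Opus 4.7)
The plan is to establish uniqueness first and then construct the pairing. For uniqueness, the key input is that on a projective non-singular surface $X$, every divisor class is equivalent to a difference $H - H'$ of very ample divisors; by Bertini's theorem a generic member of a very ample linear system is a non-singular irreducible curve, and two generic members meet transversally at a finite set of points. Hence condition (2) fixes $C \cdot C'$ whenever $C, C'$ are generic members of two very ample linear systems, and $\mathbb{Z}$-bilinearity combined with property (3) (which forces the pairing to depend only on linear equivalence classes) then determines $D_1 \cdot D_2$ for arbitrary $D_1, D_2$.

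For existence, I would give a local definition on pairs of distinct irreducible curves and extend bilinearly. If $C_1, C_2$ are distinct irreducible curves and $P \in C_1 \cap C_2$, let $f_1, f_2 \in \mathcal{O}_{X,P}$ be local equations; since $X$ is non-singular of dimension two and $C_1 \neq C_2$, the ideal $(f_1, f_2)$ has finite colength, so one can set
\[
i_P(C_1, C_2) := \dim_k \mathcal{O}_{X,P}/(f_1, f_2),
\qquad
C_1 \cdot C_2 := \sum_{P \in C_1 \cap C_2} i_P(C_1, C_2).
\]
Conditions (1) and (2) are immediate (transversality forces $i_P = 1$), and $\mathbb{Z}$-bilinear extension yields a pairing on any pair of divisors with no common irreducible component.

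The main obstacle, and the real content of the theorem, is property (3): for every principal divisor $D = (f)$ and every irreducible curve $C'$ which is not a component of $D$, one must have $D \cdot C' = 0$. Via the local definition above, this reduces to the identity $\sum_{P \in C'} v_P(f|_{C'}) = 0$, i.e. that the degree of the restricted principal divisor on the complete non-singular curve $C'$ vanishes; this is the standard fact that a rational function on a complete non-singular curve has as many zeros as poles, counted with multiplicity. Once (3) is verified, the pairing descends to $\operatorname{Pic}(X) \times \operatorname{Pic}(X)$, and self-intersections $D \cdot D$ are defined by the moving lemma: choose a representative $D' \sim D$ sharing no component with $D$ and set $D \cdot D := D \cdot D'$; consistency again follows from (3). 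The full technical details are carried out in Hartshorne, Chapter V.1.
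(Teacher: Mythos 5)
The paper gives no proof of this theorem, deferring entirely to Hartshorne, Chapter V; your sketch is exactly the standard argument from that reference (uniqueness via writing any divisor class as a difference of very ample divisors and applying Bertini, existence via local intersection multiplicities $\dim_k \mathcal{O}_{X,P}/(f_1,f_2)$ together with the degree-zero property of principal divisors on a complete curve, then the moving lemma for self-intersections), so it matches the intended proof. The only details you elide that genuinely require work are the Bertini-type lemma guaranteeing that generic members of two very ample systems meet transversally, and the passage to the normalization of $C'$ when verifying property (3) for a singular curve $C'$; both are handled in the cited chapter.
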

If $C_1$ and $C_2$ intersect at $n$ points,
but not transversely at some point,
then we have $C_1 \cdot C_2 >n$.
So, for every two different irreducible curves $C_1$, $C_2$,
we have $C_1 \cdot C_2 \geq 0$.
But $C \cdot C$ (called the self-intgersection number of $C$)
can be $<0$.
Note that $C \cdot C$ is determined indirectly using the conditoin (3).

The relation of the intersection form and blowing-up is as follows.

First, consider $E_P \cdot \widetilde C$.
From (1) and (2) above, we have
\begin{sub}
\item[($1^\prime$)]
If $C$ does not pass through $P$,
then $E_P \cdot \widetilde C=0$.
\item[($2^\prime$)]
If $C$ passes through $P$,
and $C$ is non-singular at $P$,
then $E_P \cdot \widetilde C=1$.
\item[($3^\prime$)]
Suppose that $C$ passes through $P$,
and $C$ is singular at $P$.
The local equation of $C$ is given by $F(x,y)=0$
where $x,y$ is a local coordinate at $P$
with $x=y=0$ at $P$,
and $F(x,y)$ is a formal power series of $x$ and $y$.
Since $C$ passes through $P$,
the constant term of $F$ is zero.
Since $C$ is singular at $P$,
the coefficients of $x$ and $y$ are also zero.
Let $\nu$ be the smallest integer of $i+j$
such that the coefficient of $x^iy^j$ is not zero,
then $E_P \cdot \widetilde C=\nu$.
\end{sub}

Note that the homogeneous part of degree $\nu$ of $F$ induces a polynomial of degree $\nu$
in $\frac{y}{x}$,
so there are $\nu$ roots of $\frac{y}{x}$.

Using this $E_P \cdot \widetilde C$, we have
\begin{equation} \label{2_1}
\widetilde C_1 \cdot \widetilde C_2=C_1 \cdot C_2 -
(E_P \cdot \widetilde C_1)(E_P \cdot \widetilde C_2).
\end{equation}
For simplicity, suppose that $\widetilde C_1$ and $\widetilde C_2$ do not intersect on $E_P$.
Since $C_1$ passes $(E_P \cdot \widetilde C_1)$ times through $P$
and $C_2$ passes $(E_P \cdot \widetilde C_2)$ times through $P$,
there are $(E_P \cdot \widetilde C_1)(E_P \cdot \widetilde C_2)$
virtual intersection points on $X$.
This verifies the formula (\ref{2_1}).
More consideratoins show that the above formula (\ref{2_1}) is valid
even if $\widetilde C_1$ and $\widetilde C_2$ intersect on $E_P$.
Finally we have
\begin{equation}
E_P \cdot E_P=-1,
\end{equation}
which is obtained using the condition (3).

Considering the valuation $v_{{}_C}$, we see that if $D$ is a principal divisor on $X$,
then $\widetilde D+(E_P \cdot \widetilde D)E_P$ is a principal divisor on $\widetilde X$.
This derives the following fact.

Let $\pi^\ast$ be a $\mathbb{Z}$-linear map $Div(X) \rightarrow Div(\widetilde X)$
defined by $\pi^\ast(D)=\widetilde D+(E_P \cdot \widetilde D)E_P$.
Then $\pi^\ast$ is injective and maps the principal divisor group
to the principal divisor group.
So taking the factor group, we get the isomorphism
$Pic(\widetilde X) \simeq Pic(X) \oplus \mathbb{Z}$.
 
Even if $D_1 \equiv D_2$ ($\equiv$ means the identity modulo principal divisor group),
\begin{equation}
\widetilde D_1 \equiv \widetilde D_2 +
\big\{(E_P \cdot \widetilde D_2)-(E_P \cdot \widetilde D_1)\big\}E_P.
\end{equation}

\subsection{Biregularization of $T$.} \label{biregularization}

We shall return to our problem.
Let $T$ be the birational transformation of $\mathbb{P}^1 \times \mathbb{P}^1$
defined by $T: x \mapsto x, u \mapsto \frac{P(x)}{u}$.
Let $r=\deg P$ and $c_1,c_2,\dots,c_r$ be the roots of $P$.
$T$ has $r+1$ fundamental points and $r+1$ exceptional curves.
Fundamental points are $P_i:x=c_i, u=0 \, (1 \leq i \leq r)$ and $P_{r+1}:x=u=\infty$.
Exceptional curves are $x=c_i \, (1 \leq i \leq r)$ and $x=\infty$.

Consider the blowings-up for each $P_i$.
Let $X_1$ be the blowing-up of $\mathbb{P}^1 \times \mathbb{P}^1$ at $P_1$,
and let $T_1$ be the lifting of $T$ to $X_1$.
$X_1$ is a surface in $\mathbb{P}^1 \times \mathbb{P}^1 \times \mathbb{P}^1$
defined by $\frac{u}{x-c_1}=t_1$.
$E_1$ is the curve $x=c_1, u=0$,
and $\widetilde{x=c_1}$ is the curve $x=c_1, t_1=\infty$.

We write $P(x)=b\prod_{j=1}^r(x-c_j)$.
By $T_1$, each point $(c_1,u,\infty) \in (\widetilde{x=c_1})$
is mapped to $(c_1,0,\frac{b\prod_{j\not=1}(c_1-c_j)}{u}) \in E_1$,
and each point $(c_1,0,t_1) \in E_1$
is mapped to $(c_1,\frac{b\prod_{j \not= 1}(c_1-c_j)}{t_1},\infty)
\in (\widetilde{x=c_1})$.
So $T_1$ maps $E_1$ biregularly to $\widetilde{x=c_1}$.

Next let $X_2$ be the blowing-up of $X_1$ at $P_2$,
and let $T_2$ be the lifting of $T_1$ to $X_2$.
Repeat this $r$ times
so that $T_r$ maps $E_i$ biregularly to $\widetilde{x=c_i}$
for $1 \leq i \leq r$.
Now the only fundamental point of $T_r$ is $P_{r+1}: x=u=\infty$,
and the only exceptional curve of $T_r$ is $x=\infty$.

$$\begin{matrix}
X_r & \stackrel{T_r}{\longrightarrow} & X_r \\
\downarrow & & \downarrow \\
\vdots & & \vdots \\
\downarrow & & \downarrow \\
X_2 & \stackrel{T_2}{\longrightarrow} & X_2 \\
\downarrow & & \downarrow \\
X_1 & \stackrel{T_1}{\longrightarrow} & X_1 \\
\downarrow & & \downarrow \\
\mathbb{P}^1 \times \mathbb{P}^1 &
\stackrel{T}{\longrightarrow} & \mathbb{P}^1 \times \mathbb{P}^1 \\
\end{matrix}$$

The blowing-up at $P_{r+1}$ does not make $T_{r+1}$ biregular.
Let $X_{r+1}$ be the blowing-up of $X_r$ at $P_{r+1}$,
and let $T_{r+1}$ be the lifting of $T_r$ to $X_{r+1}$.
$X_{r+1}$ is a surface in $X_r \times \mathbb{P}^1$
defined by $\frac{u}{x}=t_{r+1}$.
$E_{r+1}$ is the curve $x=\infty, u=\infty$
and $\widetilde{x=\infty}$ is the curve $x=\infty, t_{r+1}=0$.

$T_{r+1}$ maps $\widetilde{x=\infty}$ to
one point $P_{r+2} \in E_{r+1}$ defined by $t_{r+1}=\infty$, 
and maps $E_{r+1}$ to $P_{r+2}$.
So exceptional curves of $T_{r+1}$ are
$\widetilde{x=\infty}$ and $E_{r+1}$
while the only fundamental point is $P_{r+2}$.

So, blow up again.
Let $X_{r+2}$ be the blowing-up of $X_{r+1}$ at $P_{r+2}$,
and let $T_{r+2}$ be the lifting of $T_{r+1}$ to $X_{r+2}$.
$X_{r+2}$ is a surface in $X_{r+1} \times \mathbb{P}^1$ defined by
$\frac{t_{r+1}}{x}=t_{r+2}$.
Then $T_{r+2}$ maps $\widetilde{x=\infty}$ to one point $P_{r+3} \in E_{r+2}$
defined by $t_{r+2}=\infty$.
If $r=3$, then $T_5$ maps $E_4$ biregularly to $E_5$,
but if $r>3$, then $T_{r+2}$ maps both of $E_{r+1}$ and $E_{r+2}$ to one point $P_{r+3}$.

Repeating this $r-1$ times,
the only exceptional curve is $\widetilde{x=\infty}$ and
the only fundamental point is $P_{2r}$.
Let $X$ be the blowing-up of $X_{2r-1}$ at $P_{2r}$.
$X$ is a surface in $X_{2r-1} \times \mathbb{P}^1$ defined by
$\frac{t_{2r-1}}{x}=t_{2r}$.
Then $T_{2r}$ becomes biregular,
namely $T_{2r}$ maps $\widetilde{x=\infty}$ to $E_{2r}$,
and $E_{r+i}$ to $E_{2r-i}$ $(1 \leq i \leq r-1)$ biregularly.

$$\begin{matrix}
X & \stackrel{T_{2r}}{\longrightarrow} & X \\
\downarrow & & \downarrow \\
\vdots & & \vdots \\
\downarrow & & \downarrow \\
X_{r+2} & \stackrel{T_{r+2}}{\longrightarrow} & X_{r+2} \\
\downarrow & & \downarrow \\
X_{r+1} & \stackrel{T_{r+1}}{\longrightarrow} & X_{r+1} \\
\downarrow & & \downarrow \\
X_r & \stackrel{T_r}{\longrightarrow} & X_r \\
\end{matrix}$$

Thus, $T$ becomes biregular after $2r$ blowings-up in total,
once for each $1 \leq i \leq r$, and $r$ times for $P_{r+1}$.
We denote the obtained surface with $X$.

$Pic(\mathbb{P}^1 \times \mathbb{P}^1)$ has rank 2,
with $x=\infty$ and $u=\infty$ as the basis.
So that $Pic(X)$ has rank $2r+2$ with $x=\infty, u=\infty$,
and $E_i \, (1 \leq i \leq 2r)$ as the basis.

\section{Manin's criterion} \label{Manin}

\subsection{$Pic(X)$ as a Galois module.}

Let $k$ be an algebraically non-closed field,
and $K$ be an algebraic function field with two variables over $k$.
Nameky, $K$ is a finite extension of the rational funciton field with two variables over $k$
such that $k$ is algebraically closed in $K$.

Let $\overline{k}$ be the algebraic closure of $k$.
The $k$-automorphism group of $\overline{k}$ is isomorphic to
$Gal(k_{sep}/k)$, where $k_{sep}$ is the separable closure of $k$,
because every $k$-automorphism of $k_{sep}$ is extended
uniquely to $\overline{k}$.
$G:=Gal(k_{sep}/k)$ acts on
$\overline{k} \otimes_k K$,
assuming that it acts on $K$ trivially,
namely $G \ni \sigma \mapsto \overline{\sigma}=\sigma \otimes id_K$.

Assume that $\overline{k} \otimes_k K$ is rational over $\overline{k}$,
namely $\overline{k} \otimes_k K=\overline{k}(u,v)$ for some $u,v$.
Let $u^\sigma, v^\sigma$ be the image of $u,v$ by the action of $\overline{\sigma}$,
then we have $\overline{k}(u,v)=\overline{k}(u^\sigma,v^\sigma)$,
so that $u \mapsto u^\sigma, v \mapsto v^\sigma$
induces a $\overline{k}$-automorphism $T_\sigma$
of $\overline{k}(u,v)$.
$T_\sigma$ is different from $\overline{\sigma}$,
because $T_\sigma$ acts trivially on $\overline{k}$.
Let $\widetilde \sigma$ be a $\overline{k}$-automorphism of $\overline{k}(u,v)$
such that $\widetilde \sigma$ acts naturally on $\overline{k}$,
and acts trivially on $u$ and $v$.
Then we have $\overline{\sigma}=T_\sigma \circ \widetilde \sigma$.

$T_\sigma$ induces a birational transformation of $\mathbb{P}^1 \times \mathbb{P}^1$,
while $\widetilde \sigma$ induces a homeomorphic transformation
in Zariski topology of $\mathbb{P}^1 \times \mathbb{P}^1$.
Since both $u$ and $v$ belong to $l \otimes_k K$
for sufficiently large finite extension $l$ of $k$,
the number of different $T_\sigma$ is finite,
so that after finite steps of blowings-up of $\mathbb{P}^1 \times \mathbb{P}^1$,
all of $T_\sigma$ become biregular on the obtained surface $X$.
The lifting of $\widetilde \sigma$ to $X$ is homeomorphic in Zariski topology.
So the action of $\overline{\sigma}$ induces a permutation of irreducible curves,
and $Div(X)$ becomes a permutation $G$-module.

Since the action of $\overline{\sigma}$ keeps the function field
$\overline{k} \otimes_k K=\overline{k}(u,v)$ invariant,
it keeps the principal divisor group invariant,
so taking the factor module,
we see that $Pic(X)$ is also a $G$-module.

But since $P(X)$ is of finite rank as a $\mathbb{Z}$-module,
and since every orbit of an irreducible curve by the action of $G$ is finite,
$Pic(X)$ is actually a $\mathfrak{G}$-module,
where $\mathfrak{G}=Gal(l/k)$,
$l$ being a sufficiently large finite Galois extension of $k$.

Thus, $Pic(X)$ becomes a $\mathfrak{G}$-lattice.
Here a $\mathfrak{G}$-lattice means a free $\mathbb{Z}$-module of finite rank
with the action of $\mathfrak{G}$ as automorphisms.

\subsection{$k$-rationality and Galois cohomology.}

Let $K^\prime$ be another algebraic function field with two variables over $k$
such that $\overline{k} \otimes_k K^\prime$ is $\overline{k}$-rational.
Let $\overline{k} \otimes_k K^\prime=\overline{k}(u^\prime,v^\prime)$.
$G=Gal(k_{sep}/k)$ acts on $\overline{k} \otimes_k K^\prime$
as $G \ni \sigma \mapsto \overline{\sigma}^\prime=\sigma \otimes id_{K^\prime}$.

By the discussions in the previous subsection,
$\overline{\sigma}^\prime$ can be wirtten as
$\overline{\sigma}^\prime=T_\sigma^\prime \circ \widetilde{\sigma}^\prime$,
where $T_\sigma^\prime$ is a $\overline{k}$-automorphism of
$\overline{k}(u^\prime,v^\prime)$ and $\widetilde{\sigma}^\prime$
is a $\overline{k}$-automorphism of $\overline{k}(u^\prime,v^\prime)$
which acts on $\overline{k}$ naturally and asts on $u^\prime$ and $v^\prime$ trivially.
$T_\sigma^\prime$ induces a birational transformation of
$\mathbb{P}^1 \times \mathbb{P}^1$,
but after finite steps of successive blowings-up,
all $T_\sigma^\prime$ becomes biregular on the obtained surface $X^\prime$,
so we can regard $Pic(X^\prime)$ as a $\mathfrak{G}$-lattice,
where $\mathfrak{G}=Gal(l/k)$ for sufficiently large finite Galois extension $l$ of $k$.

\begin{prop}
$K$ is $k$-isomorph with $K^\prime$,
if and only if there exists a $\overline{k}$-isomorphism $T$
from $\overline{k} \otimes_k K$ to $\overline{k} \otimes_k K^\prime$
which commutes with the action of $G$,
namely for $\forall \sigma \ni G=Gal(k_{sep}/k),
T \circ \overline{\sigma}=\overline{\sigma}^\prime \circ T$.
\end{prop}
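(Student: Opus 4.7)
\noindent
The forward implication is a routine base change: if $\varphi : K \to K^\prime$ is a $k$-isomorphism, set $T := \mathrm{id}_{\overline{k}} \otimes_k \varphi$. This is clearly a $\overline{k}$-isomorphism $\overline{k}\otimes_k K \to \overline{k}\otimes_k K^\prime$, and since both $\overline{\sigma}$ and $\overline{\sigma}^\prime$ act only on the first tensor factor while $T$ modifies only the second, $T$ and $\overline{\sigma}$ automatically commute for every $\sigma\in G$.

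For the converse, the plan is pure Galois descent: take $G$-fixed points on both sides. Given a $G$-equivariant $\overline{k}$-isomorphism $T$, the hypothesis $T\circ\overline{\sigma}=\overline{\sigma}^\prime\circ T$ shows at once that $T$ sends $G$-invariants to $G$-invariants, so it restricts to a ring isomorphism
\[
T^{G} : (\overline{k}\otimes_k K)^{G} \longrightarrow (\overline{k}\otimes_k K^\prime)^{G},
\]
which is automatically $k$-linear because $T$ is $\overline{k}$-linear and $k\subset\overline{k}^{G}$. Thus it suffices to identify each invariant subring with the corresponding function field, i.e.\ to prove $(\overline{k}\otimes_k K)^{G}=K$, and likewise for $K^\prime$, after which $T^{G}$ is the desired $k$-isomorphism $K\to K^\prime$.

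The descent identification is the only real content. Choose a $k$-basis $\{b_i\}$ of $K$; then every element of $\overline{k}\otimes_k K$ has a unique expression $\sum_i a_i\otimes b_i$ with $a_i\in\overline{k}$ almost all zero, and the $G$-action of $\overline{\sigma}=\sigma\otimes\mathrm{id}_K$ acts coefficient-wise as $a_i\mapsto\sigma(a_i)$. Hence $\sum_i a_i\otimes b_i$ is $G$-fixed if and only if each $a_i$ lies in $\overline{k}^{G}$. Under the separability hypothesis in force in the paper one has $\overline{k}^{G}=k$ (the fundamental theorem of Galois theory for $k_{\mathrm{sep}}/k$, combined with the fact that $k$ is algebraically closed in $K$, rules out any contribution of purely inseparable elements), so $(\overline{k}\otimes_k K)^{G}=k\otimes_k K=K$ as required. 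The main obstacle is therefore exactly this descent step; it is conceptually standard but is the only place where the separability assumption and the hypothesis that $k$ is algebraically closed in $K$ are actually invoked, while everything else is purely formal.
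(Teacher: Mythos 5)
Your proof is correct and follows essentially the same route as the paper: the forward direction by extending a $k$-isomorphism to $\mathrm{id}_{\overline{k}}\otimes\varphi$, and the converse by restricting the $G$-equivariant $T$ to the $G$-fixed subring, which is identified with $K$ (you spell out via a basis argument the descent identification that the paper simply asserts). The only quibble is your parenthetical claim that $\overline{k}^{G}=k$, which as stated requires $k$ to be perfect (for imperfect $k$ the fixed field of $G=Gal(k_{\mathrm{sep}}/k)$ acting on $\overline{k}$ is the perfect closure $k^{1/p^{\infty}}$, and the fact that $k$ is algebraically closed in $K$ does not bear on that); the paper glosses over the same point, so this does not distinguish your argument from the original.
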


\begin{proof}
Suppose that $K$ is $k$-isomorph with $K^\prime$ and let $T_0$ be the
$k$-isomorphism.
Then $T_0$ is naturally extended to a $\overline{k}$-isomorphism $T$
from $\overline{k} \otimes_k K$ to $\overline{k} \otimes_K K^\prime$,
$T=id_{\overline{k}} \otimes T_0$.
Evidently $T$ commutes with the action of $G$.

Conversely, suppose that a required $\overline{k}$-isomorphism $T$ exists.
Since $T$ commutes with the action of $G$,
$T$ and $T^{-1}$ map the fixed field of $G$ to each other.
However, the fixed field of $\overline{k} \otimes_k K$ 
(resp. $\overline{k} \otimes_k K^\prime$) of $G$ is $K$ (resp. $K^\prime$),
and the restriction of $T$ on $K$ becomes a $k$-isomorphism from $K$ to $K^\prime$.
\end{proof}

\begin{prop}
(Manin \cite{Man67})
If $K$ is $k$-isomorph with $K^\prime$,
then $Pic(X)$ is similar with $Pic(X^\prime)$.
This means that there exist permutation $\mathfrak{G}$-lattices
$P_1$ and $P_2$ such that $Pic(X) \oplus P_1 \simeq Pic(X^\prime) \oplus P_2$.
\end{prop}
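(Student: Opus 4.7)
The plan is to promote the previous proposition's Galois-equivariant $\overline{k}$-isomorphism $T\colon \overline{k}\otimes_k K \to \overline{k}\otimes_k K'$ into a $\mathfrak{G}$-equivariant birational map between the surfaces $X$ and $X'$, and then invoke the blowing-up theorem of \S\ref{birational} in a Galois-equivariant way to replace this birational map by a biregular one without changing $\mathrm{Pic}$ except by permutation lattices.

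First I would apply the previous proposition to get a $\overline{k}$-isomorphism $T$ commuting with $G$. Since the $T_\sigma, T_\sigma'$ are already biregular on $X, X'$ respectively, $T$ descends to a $\overline{k}$-birational map $T\colon X \dashrightarrow X'$ that intertwines the $\mathfrak{G}$-actions: $T\circ\overline{\sigma} = \overline{\sigma}'\circ T$ for all $\sigma\in\mathfrak{G}$. By the theorem cited in \S\ref{birational}, $T$ becomes biregular after finitely many blowings-up at fundamental points of $T$ on $X$ and of $T^{-1}$ on $X'$.

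The central observation is that because $T$ commutes with $\mathfrak{G}$, the finite set of fundamental points of $T$ on $X$ is $\mathfrak{G}$-stable, and likewise for $T^{-1}$ on $X'$; moreover every fundamental point is defined over some finite Galois subextension. Therefore I can organize the blowings-up into $\mathfrak{G}$-orbits, blowing up a whole orbit $\{P_1,\dots,P_n\}$ at a time (independence is ensured because the points of one orbit are distinct). The exceptional divisors $E_{P_1},\dots,E_{P_n}$ produced by such an orbital blow-up are permuted by $\mathfrak{G}$ exactly as the points are, so they span a permutation $\mathfrak{G}$-sublattice of the new Picard group. Iterating the formula $\mathrm{Pic}(\widetilde{X}) \simeq \mathrm{Pic}(X)\oplus\mathbb{Z}$ from \S\ref{intersection} over these orbits, I obtain $\mathfrak{G}$-equivariant decompositions
\begin{equation*}
\mathrm{Pic}(\widetilde{X}) \;\simeq\; \mathrm{Pic}(X)\oplus P_1,\qquad
\mathrm{Pic}(\widetilde{X}') \;\simeq\; \mathrm{Pic}(X')\oplus P_2,
\end{equation*}
where $P_1$ and $P_2$ are permutation $\mathfrak{G}$-lattices (direct sums of modules of the form $\mathbb{Z}[\mathfrak{G}/H]$).

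Finally, on $\widetilde{X}$ and $\widetilde{X}'$ the lifted map $T$ is a $\overline{k}$-biregular isomorphism that commutes with $\mathfrak{G}$, so it induces an isomorphism of $\mathfrak{G}$-lattices $\mathrm{Pic}(\widetilde{X})\simeq\mathrm{Pic}(\widetilde{X}')$. Combining with the displayed decompositions yields the desired similarity $\mathrm{Pic}(X)\oplus P_1\simeq\mathrm{Pic}(X')\oplus P_2$. The main obstacle is the bookkeeping in the second step: I must verify that at each stage the next round of fundamental points is still $\mathfrak{G}$-stable (so that another orbital blow-up is available) and that the natural isomorphism $\pi^{\ast}$ on Picard groups from \S\ref{intersection} is genuinely $\mathfrak{G}$-equivariant—both facts follow from $T$'s commutation with $\mathfrak{G}$, but they have to be tracked carefully through the iteration in order to keep $P_1$ and $P_2$ honestly permutation modules.
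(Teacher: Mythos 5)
Your proposal is correct and follows essentially the same route as the paper: lift $T$ to a $\mathfrak{G}$-equivariant birational map of $X$ and $X'$, make it biregular by blowing up the $\mathfrak{G}$-stable set of fundamental points, observe that the exceptional divisors span a permutation $\mathfrak{G}$-sublattice while the $\pi^{\ast}$-image of $\mathrm{Pic}(X)$ is a $\mathfrak{G}$-equivariant copy of $\mathrm{Pic}(X)$, and conclude from the equivariant isomorphism of the Picard groups of the biregular models. The bookkeeping points you flag (stability of fundamental points at each stage, equivariance of $\pi^{\ast}$) are exactly the ones the paper also asserts without detailed verification.
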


\begin{proof}
Assume the existence of a required $\overline{k}$-isomorphism $T$
from $\overline{k}(u,v)$ to $\overline{k}(u^\prime,v^\prime)$.
Then $T$ induces a birational transformation of $\mathbb{P}^1 \times \mathbb{P}^1$.
After suitable blowings-up,
$T$ is lifted to a birational map from $X$ to $X^\prime$.
Though it may not be biregular on $X$,
after further suitable blowings-up,
we can reach the surface $Z$ and $Z^\prime$,
on which $T$ (and $T^{-1}$) becomes biregular.

Since $T$ is biregular, we have $Pic(Z) \simeq Pic(Z^\prime)$
as $\mathbb{Z}$-modules.
Since $T$ comutes with the action of $G$,
(then their liftings commutes also),
$Pic(Z) \simeq Pic(Z^\prime)$ as $\mathfrak{G}$-lattices also.

Only remained to prove is that $Pic(Z) \simeq Pic(X) \oplus P$
for some permutation $\mathfrak{G}$-lattice $P$.

Let $\{E_j\}$ be the successive blowings-up to reach $Z$ from $X$.
Since $T$ commutes with the action of $G$,
the set of fundamental points of $T$ is $G$-invariant,
and the action of $G$ induces permutations of $\{E_j\}$.

Let $\{e_i\}$ be the basis of $Pic(X)$ as a free $\mathbb{Z}$-module.
Then $Pic(Z)$ is a free $\mathbb{Z}$-module
with the basis $\{\pi^\ast e_i\} \cup \{ E_j \}$,
where $\pi^\ast$ is a $\mathbb{Z}$-linear map
from $Pic(X)$ to $Pic(Z)$,
obtained by the iteration of $\pi^\ast$ mentioned at the end of the subsection \ref{intersection}.
Let $M_1$ (resp. $M_2$) be a free $\mathbb{Z}$-module with the basis
$\{\pi^\ast e_i\}$ (resp. $\{E_j\}$),
then $Pic(Z) \simeq M_1 \oplus M_2$ as $\mathbb{Z}$-modules.
However, $M_2$ is a permutation $\mathfrak{G}$-lattice
as mentioned above.
We can show that $M_1$ is also a $\mathfrak{G}$-lattice isomorphic to $Pic(X)$.

This completes the proof.
\end{proof}

\begin{cor}
If $K$ is $k$-isomorph with $K^\prime$, then
$H^1(\mathfrak{G},Pic(X)) \simeq H^1(\mathfrak{G},Pic(X^\prime))$
and $\widehat H^{-1}(\mathfrak{G},Pic(X)) \simeq
\widehat H^{-1}(\mathfrak{G},Pic(X^\prime))$,
where $H^1$ is the first Galois cohomology
and $\widehat H^{-1}$ is Tate cohomology.
\end{cor}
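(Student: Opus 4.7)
The plan is to read the conclusion straight off the preceding Proposition, which supplies permutation $\mathfrak{G}$-lattices $P_1, P_2$ with $Pic(X) \oplus P_1 \simeq Pic(X^\prime) \oplus P_2$. It is then enough to check two generalities: that both $H^1(\mathfrak{G},-)$ and $\widehat H^{-1}(\mathfrak{G},-)$ are additive on direct sums, and that they vanish on every permutation $\mathfrak{G}$-lattice. Applying both functors to the above similarity and canceling the permutation summands will give the desired isomorphisms.

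First I would invoke additivity: for any two $\mathfrak{G}$-modules $A$ and $B$ one has $H^n(\mathfrak{G},A\oplus B) \simeq H^n(\mathfrak{G},A) \oplus H^n(\mathfrak{G},B)$, and the analogous identity for Tate cohomology, because the standard (co)cochain constructions are additive in the coefficient module. Next I would treat the vanishing. A permutation $\mathfrak{G}$-lattice is by definition a finite direct sum of lattices of the form $\mathbb{Z}[\mathfrak{G}/H_i]$ with $H_i \leq \mathfrak{G}$, so by additivity it suffices to show that $H^1(\mathfrak{G},\mathbb{Z}[\mathfrak{G}/H_i])$ and $\widehat H^{-1}(\mathfrak{G},\mathbb{Z}[\mathfrak{G}/H_i])$ both vanish. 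Shapiro's lemma (valid for Tate cohomology because $\mathfrak{G}$ is finite) reduces each to the corresponding cohomology of the subgroup $H_i$ with trivial $\mathbb{Z}$-coefficients: $H^n(H_i,\mathbb{Z})$.

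Finally I would compute those two groups by hand. Since $H_i$ is finite, $H^1(H_i,\mathbb{Z}) = \mathrm{Hom}(H_i,\mathbb{Z}) = 0$. For the Tate piece, the norm map $N_{H_i}\colon \mathbb{Z} \to \mathbb{Z}$ acts as multiplication by $|H_i|$ on the trivial module $\mathbb{Z}$, hence is injective, so $\widehat H^{-1}(H_i,\mathbb{Z}) = \ker N_{H_i}/I_{H_i}\mathbb{Z} = 0$. Combining, both $H^1(\mathfrak{G},P_j)$ and $\widehat H^{-1}(\mathfrak{G},P_j)$ vanish for $j=1,2$, and the similarity $Pic(X) \oplus P_1 \simeq Pic(X^\prime) \oplus P_2$ yields the two claimed isomorphisms.

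There is essentially no hard step here, since all genuine content was packed into the previous Proposition. The only point worth checking carefully is the compatibility of conventions: one must confirm that "permutation $\mathfrak{G}$-lattice" in the Proposition is really a direct sum of coset lattices $\mathbb{Z}[\mathfrak{G}/H_i]$ (which it is, by construction of the $M_2$-summand from the $G$-orbits on the blown-up exceptional curves), so that Shapiro's lemma applies summand by summand. Once that is granted, the corollary follows immediately.
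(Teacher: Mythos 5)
Your argument is exactly the paper's: the Corollary is deduced from the preceding Proposition's similarity $Pic(X)\oplus P_1\simeq Pic(X^\prime)\oplus P_2$ together with the vanishing of $H^1$ and $\widehat H^{-1}$ on permutation $\mathfrak{G}$-lattices, which the paper states without proof and you correctly justify via additivity and Shapiro's lemma. The proposal is correct and takes the same route, only spelling out the standard vanishing computation in more detail.
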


This comes from $H^1(\mathfrak{G},P)=\widehat H^{-1}(\mathfrak{G},P)=0$
for a permutation $\mathfrak{G}$-lattice $P$.

Especially, if $K$ is $k$-rational,
then $H^1(\mathfrak{G},Pic(X))=\widehat H^{-1}(\mathfrak{G},Pic(X))=0$
by the following reason.

The $k$-rationality of $K$ means that $K$ is $k$-isomorph with
the two dimensional rational function field $K^\prime=k(x,y)$.
In this case,
$\overline{k} \otimes_k K^\prime=\overline{k}(x,y)$
and $\overline{\sigma}^\prime$ acts trivially on $x$ and $y$.
So, $X^\prime=\mathbb{P}^1 \times \mathbb{P}^1$
and $Pic(X^\prime)$ is a trivial $\mathfrak{G}$-lattice,
so that $H^1(\mathfrak{G},Pic(X^\prime))=\widehat H^{-1}(\mathfrak{G},Pic(X^\prime))=0$.
In other words, $H^1(\mathfrak{G},Pic(X)) \not=0$
or $\widehat H^{-1}(\mathfrak{G},Pic(X)) \not= 0$
is a criterion for the $k$-irrationality of $K$.

\subsection{Proof of Theorem \ref{A}}
Let $K$ be the quadratic extension of $k(x,y)$ defined by
(\ref{cha}):$z^2=ay^2+P(x)$ with conditions (1) $\sim$ (5)
in the subsection \ref{main}.
Then $\overline{k} \otimes_k K=\overline{k}(x,u)$
where $u=z+\sqrt{a}y$.
For $\sigma \in G=Gal(k_{sep}/k)$,
$T_\sigma$ is the identity or equal to $T: x \mapsto x, u \mapsto \frac{P(x)}{u}$
according to whether $\sqrt{a}$ is invariant by $\sigma$ or not.
$T$ induces a birational transformation of $\mathbb{P}^1 \times \mathbb{P}^1$,
and after $2r$ blowings-up,
it becomes biregular on the obtained surface $X$,
as studied in the section \ref{biregularization}.
Then $Pic(X)$ is a free $\mathbb{Z}$-module of rank $2r+2$
with the basis $E_i (1 \leq i \leq 2r)$ and $x=\infty, u=\infty$.

For simplicity, we omit $\widetilde{\ }$ for the blowing-up,
and the lifting of $C$ is denoted with the same symbol $C$.
The confusion is avoided by seeing $C$ is a divisor of which surface.

We shall determine the structure of $Pic(X)$ as a $\mathfrak{G}$-lattice,
where $\mathfrak{G}=Gla(l/k)$,
$l$ being the splitting field of $P(x)$.

As studied in the subsection \ref{biregularization},
$T$ maps $E_i$ to $(x=c_i)$
for $1 \leq i \leq r$,
$(x=\infty)$ to $E_{2r}$,
$E_{r+i}$ to $E_{2r-i}$ for $1 \leq i \leq r-1$,
and $(u=\infty)$ to $(u=0)$.
The next question is what divisor classes
$(x=c_i)$ and ($u=0)$ belong to.

Let $\pi^\ast$ be a $\mathbb{Z}$-linear map from $Div(\mathbb{P}^1 \times \mathbb{P}^1)$
to $Div(X)$,
obtained by the iteration of $\pi^\ast$ mentionaed at the end of the subsection \ref{intersection}.
Then we have
\begin{eqnarray}
\pi^\ast(x=c_i) &=& (x=c_i)+E_i \text{ for } 1 \leq i \leq r \nonumber \\
\pi^\ast(x=\infty) &=& (x=\infty)+\sum_{j=1}^r E_{r+j} \\
\pi^\ast(u=0) &=& (u=0) + \sum_{j=1}^r E_j \nonumber \\
\pi^\ast(u=\infty) &=& (u=\infty) + \sum_{j=1}^r jE_{r+j} \nonumber
\end{eqnarray}
In $Div(\mathbb{P}^1 \times \mathbb{P}^1)$
we have $(x=c_i) \equiv (x=\infty)$
and $(u=0) \equiv (u=\infty)$,
so that in $Div(X)$, we have
\begin{eqnarray}
(x=c_i) &\equiv& (x=\infty)-E_i+\sum_{j=1}^r E_{r+j} \\
(u=0) &\equiv& (u=\infty)-\sum_{j=1}^r E_j + \sum_{j=1}^r jE_{r+j} \nonumber
\end{eqnarray}
Therefore, the action of $\overline{\sigma}=T_\sigma \circ \widetilde{\sigma}$
on $Pic(X)$ is represented by the following matrix $g_\sigma$,
with $E_1, E_2, \dots, E_r, (u=\infty), (x=\infty), E_{r+1}, E_{r+2},\dots, E_{2r}$
as the basis in this order.
\begin{eqnarray}
\text{For } \sigma \in N=Gal(l/k(\sqrt{a})), &
g_\sigma=
\begin{pmatrix}
A_\sigma & 0 \\
0 & I_{r+2} \\
\end{pmatrix} \\
\text{For } \sigma \in \mathfrak{G} \setminus N, &
g_\sigma=
\begin{pmatrix}
-A_\sigma & -1 & 0 \\
0 & 1 & 0 \\
1 & c & B
\end{pmatrix} \nonumber
\end{eqnarray}
where $B=\begin{pmatrix}
0 & & 1 \\
 & \rotatebox{90}{ $\ddots$ } & \\
1 & & 0
\end{pmatrix}$
is a $(r+1) \times (r+1)$ matrix,
$c=(0,1,2,\dots,r)$,
and 1 (resp. 0, -1) stands for the matrix whose entries are all 1
(resp. 0,-1).

$A_\sigma$ is the permutation matrix of the permutation of $\{c_i\}$
induced by $\sigma$.
Suppose that $P(x)$ is a product of $r^\prime$ irreducible polynomials.
Then, the set of roots $\{c_i\}$ of $P(x)$ is divided into $r^\prime$ blocks,
each of which consists of the roots of the same irerducible component.
Each block is a transitive part by the action of $\mathfrak{G}$.
Since each irreducible component is assumed to be irreducible
also over $k(\sqrt{a})$,
the action of $N$ is also transitive on each block.
The block is called even (resp. odd),
when the degree of the corresponding irreducible polynomial is even (resp. odd).
$Pic(X)$ is decomposed into the direct sum of two $\mathfrak{G}$-lattices,
one of which is a permutation $\mathfrak{G}$-lattice.
\begin{equation}
Pic(X)=M_1 \oplus M_2,
M_2 \text{ is a permutation } \mathfrak{G} \text{-lattice.}
\end{equation}
The rank of $M_2$ is $r+1$ when $r$ is odd,
and $r$ when $r$ is even.
Its basis are $(x=\infty), E_{r+1}, \dots, E_{2r}$
when $r$ is odd,
omitting $E_{\frac{3r}{2}}$ when $r$ is even.

The rank of $M_1$ is $r+1$ when $r$ is odd,
and $r+2$ when $r$ is even.
Its basis are the following $\{e_i\}$.
\begin{eqnarray}
e_i=E_i-(x=\infty)-\sum_{1 \leq j < \frac{r}{2}}E_{r+j} \text{ for } 1 \leq i \leq r \nonumber \\
e_{r+1}=(u=\infty)-r(x=\infty)-\sum_{1 \leq j < \frac{r}{2}}(r-j)E_{r+j}. \\
\text{when } r \text{ is even, } e_{r+2}=E_{\frac{3r}{2}} \nonumber
\end{eqnarray} 
The action of $\mathfrak{G}$ on $M_1$ is representaed by the following matrices
with the basis $\{e_i\}$.
(We use the same symbol $g_\sigma$).
\hfill\break
When $r$ is odd
\begin{eqnarray} \label{odd1}
\text{For } \sigma \in N=Gal(l/k(\sqrt{a})), & g_\sigma=
\begin{pmatrix} A_\sigma & 0 \\ 0 & 1 \end{pmatrix} \\
\text{For } \sigma \in \mathfrak{G} \setminus N, & g_\sigma=
\begin{pmatrix} -A_\sigma & -1 \\ 0 & 1 \end{pmatrix} \nonumber
\end{eqnarray}
\hfill\break
When $r$ is even
\begin{eqnarray} \label{even1}
\text{For } \sigma \in N=Gal(l/k(\sqrt{a})), & g_\sigma=
\begin{pmatrix} A_\sigma & 0 \\ 0 & I_2 \end{pmatrix} \\
\text{For } \sigma \in \mathfrak{G} \setminus N, & g_\sigma=
\begin{pmatrix} -A_\sigma & -1 & 0 \\
0 & 1 & 0 \\
1 & r/2 & 1 \end{pmatrix} \nonumber
\end{eqnarray}

Since $M_2$ is a permutataion $\mathfrak{G}$-lattice,
we have $\widehat H^{-1}(\mathfrak{G},Pic(X))
\simeq \widehat H^{-1}(\mathfrak{G},M_1)$ and
$H^1(\mathfrak{G},Pic(X)) \simeq H^1(\mathfrak{G},M_1)$.

Let $M_0$ be the submodule of $M_1$ spanned by
$\{e_i | 1 \leq i \leq r\}$.
An element of $M_0$ is written as $\sum_{i=1}^r a_ie_i$, $a_i \in \mathbb{Z}$.
Let $s_j$ be the sum of $a_i$
when $i$ runs over the $j$-th block.
Let $M_e$ be the submodule of $M_0$,
consisting of elements such that
$\sum_{j=1}^{r^\prime} s_j$ is even.
Let $M_b$ be the submodule of $M_0$,
consisting of elemetns such that $s_j$ is even for every $j$.
We have $M_0/M_e \simeq \mathbb{Z}/2\mathbb{Z},
M_0/M_b \simeq (\mathbb{Z}/2\mathbb{Z})^{r^\prime}$ and
$M_e/M_b \simeq (\mathbb{Z}/2\mathbb{Z})^{r^\prime-1}$,
where $r^\prime$ is the number of the blocks.

By definition, we have
$\widehat H^{-1}(\mathfrak{G},M_1)=Z/B$,
where $Z$ and $B$ are submodules of $M_1$ defined by
\begin{eqnarray}
Z=ker(\sum_{\sigma} g_\sigma) \\
B \text{ is the module spanned by }
\bigcup_{\sigma} Im(\sigma-id), \nonumber
\end{eqnarray}
where the summation and the union are taken over $\sigma \in \mathfrak{G}$.

First, we suppose that $r$ is odd.
$\sum g_\sigma$ is zero except the last column,
so that $Z=M_0$.
$B$ is spanned by $e_i+e_j, e_i-e_j$
for $i,j$ in the same block,
and one more element $\sum_{i=1}^r e_i$,
so that $B$ is generated by $M_b$
and $\sum_{i=1}^r e_i$.
Since $M_0/M_b \simeq (\mathbb{Z}/2\mathbb{Z})^{r^\prime}$
and since $\sum_{i=1}^r e_i \not\in M_b$,
we have
\begin{equation}
\widehat H^{-1}(\mathfrak{G},Pic(X)) \simeq (\mathbb{Z}/2\mathbb{Z})^{r^\prime-1}
\end{equation}

Next, we suppose that $r$ is even.
$\sum g_\sigma$ is zero
except the last row and the $(r+1)$-th column,
so that the rank of $Z$ is $r$ and the projection to $M_0$
(projection as $\mathbb{Z}$-modules) is injective.
Let $Z^\prime$ and $B^\prime$ be the image of the projection of $Z$ and $B$
respectively,
then $Z/B \simeq Z^\prime/B^\prime$.

We see that $Z^\prime=M_e$ and $B^\prime$ is generated by $M_b$ and
$\sum_{i=1}^r e_i$.
SInce $M_e/M_b \simeq (\mathbb{Z}/2\mathbb{Z})^{r^\prime-1}$,
and since $\sum_{i=1}^r e_i \in M_b$
if and only if odd block does not exist,
we have
\begin{eqnarray}
\widehat H^{-1}(\mathfrak{G},Pic(X)) \simeq (\mathbb{Z}/2\mathbb{Z})^{r^\prime-1}
& \text{ if odd block does not exist} \\
\widehat H^{-1}(\mathfrak{G},Pic(X)) \simeq (\mathbb{Z}/2\mathbb{Z})^{r^\prime-2}
& \text{ if odd blocks exist} \nonumber
\end{eqnarray}
As for $H^1(\mathfrak{G},M_1)$,
we proceed as follows.
In general for a $\mathfrak{G}$-lattice $M$,
$H^1(\mathfrak{G},M)$ is isomorphic to $\widehat H^{-1}$
of the dual lattice $M^\prime$.
So that as for $H^1(\mathfrak{G},M_1)$,
it suffices to calculate $\widehat H^{-1}$
for the transposed matrices of (\ref{odd1}) or (\ref{even1}).
The calculation shows that
$H^1(\mathfrak{G},M_1) \simeq \widehat H^{-1}(\mathfrak{G},M_1)$,
though the matrices of (\ref{odd1}) or (\ref{even1}) are not symmetric.

Thus, the proof of Theorem \ref{A} has been completed.
Therefore, the main result (stated in the subsection \ref{main})
has been proved except when $P(x)$ is irreducible or
a product of two irreducible polynomials of odd degree.

\section{Canonical divisor and blowing-down.}

\subsection{Canonical divisor}

Let $X$ be a projective non-singular surface.
A canonical divisor of $X$ is defined as follows.

Let $f,g \in k(X)$ be mutually algebraic indepedent.
Let $C$ be an irreducible curve on $X$ and $P$ be a non-singular point of $C$.
Take a local coordinate $(x,y)$ at $P$
and consider the Jacobian
$\frac{\partial (f,g)}{\partial (x,y)}=
\begin{vmatrix}
\frac{\partial f}{\partial x} & \frac{\partial f}{\partial y} \\
\frac{\partial g}{\partial x} & \frac{\partial g}{\partial y} \\
\end{vmatrix}$,
then we can show that
$v_{{}_C}\Big(\frac{\partial (f,g)}{\partial (x,y)}\Big)$
is independent of the choice of a point $P$
and the choice of a local coordinate $(x,y)$.
Canonical divisor of $(f,g)$
is defined as $\sum v_{{}_C}\Big(\frac{\partial (f,g)}{\partial (x,y)}\Big)C$.

Take another $f_1,g_1 \in k(X)$ mutually algebraic independent.
Then canonical divisor of $(f_1,g_1)$ belongs to the same divisor class
with that of $(f,g)$,
namely all canonical divisors determine the unique divisor class in $Pic(X)$.
This is called the canonical divisor class of $X$ and denoted with $\Omega$.

\begin{rem}
In more general setting, the definition of the canonical divisor calss is more complicated,
but for a projective non-singular surface $X$,
it is nothing but the one defined above.
\end{rem}

\begin{exmp}
For $\mathbb{P}^1 \times \mathbb{P}^1$,
we shall determine the intersection form and the canonical divisor.

On $\mathbb{P}^1 \times \mathbb{P}^1$,
any irreducible curve $C$ other than $(x=\infty)$
and ($u=\infty)$ is the zero point set of an irreducible polynomial
$f(x,u)$.
When the degree of $f$ is $n$ with respect to $x$,
and $m$ with respect to $u$,
then we have
$C \equiv n(x=\infty)+m(u=\infty)$.

Intersection form on $\mathbb{P}^1 \times \mathbb{P}^1$
is defined by
\begin{eqnarray}
(x=\infty) \cdot (x=\infty)=(u=\infty) \cdot (u=\infty)=0 \\
(x=\infty) \cdot (u=\infty)=1 \nonumber
\end{eqnarray}
The canonical divisor is
\begin{equation} \label{ex}
\Omega=-2(x=\infty)-2(u=\infty)
\end{equation}
Take $f=x$ and $g=u$,
then since $(x,u)$ is a local coordinate except on the lines
$(x=\infty)$ and $(u=\infty)$,
we have $\frac{\partial (x,u)}{\partial (x,u)}=1$.
In a neighborhood of the line $(x=\infty)$,
a local coordinate is $(t,u)$
where $t=\frac{1}{x}$,
so $x=\frac{1}{t}$,
then $\frac{\partial (x,u)}{\partial (t,u)}=-\frac{1}{t^2}$,
thus $v_{(x=\infty)}\Big(\frac{\partial (x,u)}{\partial (t,u)}\Big)=-2$.
The similar result holds for the line $(u=\infty)$.
This verifies (\ref{ex}).
From (\ref{ex}) we see that
\begin{equation}
C \cdot \Omega =-2(m+n), \,
\Omega \cdot \Omega=8
\end{equation}
\end{exmp}

Return to a general $X$ and we shall consider the relation with the blowing-up.
Let $\widetilde X$ be the blowing-up of $X$ at a point $P$.
Then the canonical divisor of $\widetilde X$ is given by
\begin{equation} \label{OmegaX}
\Omega_{\widetilde X}
=\pi^\ast \Omega_X+E_P.
\end{equation}
This can be derived as follows.
Let $f=x$ and $g=y$,
where $(x,y)$ is a local coordinate of $X$ at $P$
with $x=y=0$ at $P$.
Since $\frac{\partial (f,g)}{\partial (x,y)}=1$,
$\Omega$ does not pass through $P$,
so $\widetilde \Omega \cdot E_p=0$.
On the other hand,
a local coordinate of $\widetilde X$ in a neighborhood of $E_p$
is $(x,t)$ where $t=\frac{y}{x}$,
so $y=tx$, then $\frac{\partial(x,y)}{\partial(x,t)}=x$,
thus $v_{E_P}\big(\frac{\partial(x,y)}{\partial(x,t)}\big)=1$.
This implies $\Omega_{\widetilde{X}}=\widetilde{\Omega}_X+E_P$.

For other canonical divisors,
extending the above relation in the form compatible with
the aciton of $\pi^\ast$,
we get (\ref{OmegaX}) above.

Since $\pi^\ast C_1 \cdot \pi^\ast C_2 =C_1 \cdot C_2$
and $\pi^\ast C \cdot E_P=0$ for any irreducible curve
$C, C_1, C_2$ on $X$,
from (\ref{OmegaX}) we have
\begin{eqnarray} \label{widetildeC}
\widetilde{C} \cdot \Omega_{\widetilde X} = C \cdot \Omega_X
+ \widetilde C \cdot E_P \\
E_P \cdot \Omega_{\widetilde X}=-1, \,
\Omega_{\widetilde X} \cdot \Omega_{\widetilde X}=
\Omega_X \cdot \Omega_X-1 \nonumber
\end{eqnarray}

\subsection{Blowing down}

Blowing-down is the inverse operation of the blowing up.
Let $X$ be a projective non-singular surface,
and assume that there eixsts an irreducible curve $F$ on $X$
satisfying $F \cdot F=-1$
and $\Omega \cdot F=-1$.
($F$ is necessarily biregular to the projective line $\mathbb{P}^1$).

\begin{thm}
There exists a unique (modulo biregularity)
projective non-singular surface $\overline{X}$
such that the blowing-up $\widetilde{\overline{X}}$
at some point $Q \in \widetilde X$
is biregular to $X$,
mapping $E_Q$ to $F$.
\end{thm}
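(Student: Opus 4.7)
The plan is to follow Castelnuovo's classical contractibility argument. First I would establish that $F \simeq \mathbb{P}^1$, which is asserted parenthetically: by the adjunction formula on the non-singular surface $X$, the arithmetic genus satisfies $2g_F - 2 = F \cdot F + F \cdot \Omega = -1 + (-1) = -2$, so $g_F = 0$, and since $F$ is an irreducible curve on a smooth surface with arithmetic genus zero it is isomorphic to $\mathbb{P}^1$.

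Next I would construct $\overline{X}$ via a linear system that contracts $F$. Pick a very ample divisor $H$ on $X$ and let $d = H \cdot F > 0$. Set $D = H + dF$, so that $D \cdot F = H \cdot F + d(F \cdot F) = d - d = 0$. One checks that $D$ is still numerically effective and big, and for $n \gg 0$ the linear system $|nD|$ is base-point-free, separates points and tangent directions on the open set $X \setminus F$, and is constant along $F$ because $nD \cdot F = 0$. The morphism $\varphi: X \to \mathbb{P}^N$ defined by $|nD|$ therefore factors through the set-theoretic contraction of $F$, and its image $\overline{X} := \varphi(X)$ is a projective surface with a distinguished point $Q := \varphi(F)$.

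The main obstacle is checking non-singularity of $\overline{X}$ at $Q$; away from $Q$ the map $\varphi$ is an isomorphism so nothing is to prove there. For the local analysis near $Q$ I would argue as follows: since $F \simeq \mathbb{P}^1$ and $\mathcal{O}_F(F) \simeq \mathcal{O}_{\mathbb{P}^1}(-1)$ (this follows from $F \cdot F = -1$ together with the fact that $\mathcal{O}_X(F)\!\mid_F$ has degree $F \cdot F$), one can exhibit two sections of an appropriate twist of $\mathcal{O}_X$ near $F$ that descend to local parameters at $Q$, showing $\widehat{\mathcal{O}}_{\overline{X},Q}$ is a formal power series ring in two variables. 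Equivalently, one can invoke Artin's criterion, which is exactly tailored to contract a curve with negative self-intersection matrix on a smooth surface.

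Finally, for uniqueness, suppose $\overline{X}_1$ and $\overline{X}_2$ both satisfy the conclusion, with points $Q_1, Q_2$ and exceptional curves both identified with $F$. The isomorphism $X \setminus F \to \overline{X}_1 \setminus \{Q_1\} \to \overline{X}_2 \setminus \{Q_2\}$ extends by Zariski's main theorem (or the valuative criterion, since both $\overline{X}_i$ are projective and non-singular) to a birational morphism $\overline{X}_1 \dashrightarrow \overline{X}_2$ that is regular off a codimension-two locus; since both surfaces are smooth and the map is birational sending $Q_1 \mapsto Q_2$, it is everywhere regular and has a regular inverse, hence a biregular isomorphism. The universal property of the blowing-up then identifies $X$ with each $\widetilde{\overline{X}_i}$, completing uniqueness.
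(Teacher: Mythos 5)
The paper does not actually prove this statement: it is Castelnuovo's contractibility criterion, presented in \S 4 as background in the same spirit as the other results there that are cited from Hartshorne (Chap.\ 5, Thm.\ V.5.7) without proof. So there is no ``paper's proof'' to compare against; what you have written is a reconstruction of the standard textbook argument, and its overall architecture is correct: adjunction gives $p_a(F)=0$ hence $F\simeq\mathbb{P}^1$; a divisor of the form $H+dF$ with $d=H\cdot F$ is orthogonal to $F$ and positive on every other curve, so the associated morphism contracts exactly $F$; and your uniqueness argument (a birational map of non-singular projective surfaces that is biregular off a point has no exceptional curves for either direction, hence no fundamental points, hence is biregular) is sound.

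Two cautions about where the real work is hiding. First, ``one checks that $|nD|$ is base-point-free and separates points and tangent directions on $X\setminus F$'' is not a consequence of $D$ being nef and big in general; in Hartshorne's proof this is exactly the inductive cohomological computation with the exact sequences
$0\to\mathcal{L}(H+(i-1)F)\to\mathcal{L}(H+iF)\to\mathcal{O}_F\otimes\mathcal{L}(H+iF)\to 0$,
using $H^1(X,\mathcal{L}(H))=0$ and $\deg\bigl(\mathcal{L}(H+iF)\!\mid_F\bigr)=d-i\ge 0$. Second, invoking Artin's contractibility criterion only produces a \emph{normal} point $Q$; smoothness of $\overline{X}$ at $Q$ is precisely the content of the theorem and requires the explicit identification of $\widehat{\mathcal{O}}_{\overline{X},Q}$ with a two-variable power series ring, which rests on $\mathcal{O}_F(-F)\simeq\mathcal{O}_{\mathbb{P}^1}(1)$ and the resulting dimension count $h^0(\mathcal{O}_{nF})=\binom{n+1}{2}$. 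You correctly point to this computation but do not carry it out; with it supplied, your outline is the standard complete proof.
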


The surface $\overline{X}$ is called the blowing-down of $X$ by $F$.
Let $\varphi$ be the biregular mapping $X \rightarrow
\widetilde{\overline{X}}$,
and $\pi$ be the projection
$\widetilde{\overline{X}} \mapsto \overline{X}$.
For an irreducible curve $C \not= F$ on $X$,
let $\overline{C}$ be the image of $C$ by $\pi \circ \varphi$,
then $\overline{C}$ is an irreducible curve of $\overline{X}$
and all irreducible curves on $\overline{X}$
are obtained in this way.
So that identifying $C$ with $\overline{C}$,
we get $Div(X)=Div(\overline{X}) \oplus \mathbb{Z}$,
where $\mathbb{Z}$-part is the free $\mathbb{Z}$-module
with $F$ as the basis.

Let $\overline{\pi}$ be the $\mathbb{Z}$-linear map from $Div(X)$ to $Div(\overline{X})$
defined by $\overline{\pi}(D)=\overline{D-\lambda F}$,
where $\lambda$ is the coefficient of $F$ in $D$.
Then $\overline{\pi}$ is surjective and 
maps the principal divisor group
to the principal divisor group bijectively.
The kernel of $\overline{\pi}$
is the free $\mathbb{Z}$-module with $F$ as the basis.
So $\overline{\pi}$ induces the isomorphism
$Pic(X) \simeq Pic(\widetilde X) \oplus \mathbb{Z}$.

Intersection form on $\overline{X}$ is given by
\begin{equation}
\overline{D}_1 \cdot \overline{D}_2=
D_1 \cdot D_2 + (D_1 \cdot F)(D_2 \cdot F).
\end{equation}
The canonical divisor of $\overline{X}$ is given by
\begin{equation}
\Omega_{\overline{X}}=
\overline{\pi}(\Omega_X)=\overline{\Omega_X-\lambda F}.
\end{equation}
We have
\begin{eqnarray}
\overline{D} \cdot \Omega_{\overline{X}}=D \cdot \Omega_X - D \cdot F \\
\Omega_{\overline{X}} \cdot \Omega_{\overline{X}}=
\Omega_X \cdot \Omega_X+1. \nonumber
\end{eqnarray}

\subsection{Blowing-up and down.} \label{updown}

Let $X$ be a projective non-singular surface and $F$ be an irreducible curve
on $X$ satisfying $F \cdot F=0$ and $F \cdot \Omega=-2$.
($F$ is necessarily biregular to the projective line $\mathbb{P}^1$).
Consider the blowing-up $\widetilde X$ at a point $P$ on $F$.
Then we have $\widetilde F \cdot \widetilde F=-1$
and $\widetilde F \cdot \Omega_{\widetilde X}=-1$,
so that we can consider the blowing-down of $\widetilde X$ by $\widetilde F$
and obtain $\overline{\widetilde{X}}$.

$X$ and $\overline{\widetilde X}$ are birational,
but not reguar in any direction.
Let $\pi_1$ be the projection $\widetilde X \rightarrow X$
and $\pi_2$ be the projection $\widetilde{\overline{\widetilde{X}}}
\rightarrow \overline{\widetilde{X}}$,
then $\rho=\pi_2 \circ \varphi \circ \pi_1^{-1}$
is the birational mapping from $X$ to $\overline{\widetilde{X}}$.

The fundamental point of $\rho$ is $P$,
and the exceptional curve of $\rho$ is $F$.
On the other hand,
the fundamental point of $\rho^{-1}$ is $Q$,
and the exceptional curve of $\rho^{-1}$ is $\overline{E}_P$.
($Q$ is a point on $\overline{E}_P$,
because $\widetilde{\overline{E}}_P \cdot E_Q=
E_P \cdot \widetilde F=1$).

For an irreducible curve $C \not=F$ on $X$,
$\overline{\widetilde{C}}$ is an irreducible curve on $\overline{\widetilde{X}}$,
and besides them,
$\overline{E}_P$ is the only irreducible curve on $\overline{\widetilde{X}}$.
So that $Div(X) \simeq Div(\overline{\widetilde{X}})$,
but $F$ is omitted from the basis of $Div(X)$
and $\overline{E}_P$ is added as the basis of $Div(\overline{\widetilde{X}})$.

However, we need not replace the basis for $Pic$.
Let $\rho^\ast=\overline{\pi}_2 \circ \pi_1^\ast$ be
the $\mathbb{Z}$-linear map from $Div(X)$ to $Div(\overline{\widetilde{X}})$.
$\rho^\ast$ is written as
\begin{equation}
\rho^\ast(D)=\overline{\widetilde{D-\lambda F}}+
(\widetilde D \cdot E_P) \overline{E}_P.
\end{equation}
$\rho^\ast$ maps $Div(X)$ to $Div(\overline{\widetilde{X}})$ bijectively,
and maps the principal divisor group to the principal divisor group.
So, $\rho^\ast$ induces an isomorphism of $Pic(X)$ to $Pic(\overline{\widetilde X})$.
Since $\rho^\ast$ maps $F$ to $\overline{E}_P$,
the divisor class of $F$ is mapped to the divisor class of $\overline{E}_P$.
(More precisely,
for a divisor $D$ on $X$,
$D \equiv F$ on $X$ is equivalent with
$\rho^\ast(D) \equiv \overline{E}_P$).

The intersection form on $\overline{\widetilde{X}}$ is given as follows.
\begin{eqnarray}
\overline{E}_P \cdot \overline{E}_P=0,\ 
\overline{\widetilde{C}} \cdot \overline{E}_P=C \cdot F
\text{ for } C \not= F. \nonumber \\
\overline{\widetilde{C}}_1 \cdot \overline{\widetilde{C}}_2=
C_1 \cdot C_2+(C_1 \cdot F)(C_2 \cdot F)
-(C_1 \cdot F)(\widetilde C_2 \cdot E_P) \\
-(\widetilde C_1 \cdot E_P)(C_2 \cdot F). \nonumber
\end{eqnarray}
The canonical divisor of $\overline{\widetilde{X}}$ is given by
\begin{equation}
\Omega_{\overline{\widetilde{X}}}=
\rho^\ast(\Omega_X)+\overline{E}_P=
\overline{\widetilde{\Omega_X-\lambda F}}+
\big\{ (\widetilde{\Omega}_X \cdot E_P)+1\big\}
\overline{E}_P.
\end{equation}
Of cource we have
$\Omega_{\overline{\widetilde{X}}} \cdot
\Omega_{\overline{\widetilde{X}}} =
\Omega_X \cdot \Omega_X$
and $\overline{E}_P \cdot \Omega_{\overline{\widetilde X}}=-2$.

\section{$\mathfrak{G}$-invariant divisors.} \label{chB}

\subsection{Redunction to the even degree case.}

We shall return to the $k$-rationality problem of $k(x,y,z)$
defined by $z^2=ay^2+P(x)$,
with the conditions (1) $\sim$ (5) in the subsection \ref{main}.

Without loss of generality,
we can assume that $\deg P=r$ is even,
by the following reason.

Suppose that $\deg P=r$ is odd, and put $r=2s-1$.
Put $x^\prime=\frac{1}{x}, y^\prime=x^{\prime s}y,
z^\prime=x^{\prime s}z$,
then $z^2=ay^2+P(x)$ is re-written as
$z^{\prime 2}=ay^{\prime 2}+x^{\prime2s}P(\frac{1}{x^\prime}).$
When $P(x)=\sum_{i=0}^{2s-1} a_i x^i$ with $a_{2s-1} \not= 0$,
$P_1(x) := x^{\prime 2s}P(\frac{1}{x^\prime})=
\sum_{i=0}^{2s-1} a_i x^{\prime 2s-i}$
is a polynomial with the degree $2s$.
Since $k(x,y,z)=k(x^\prime,y^\prime,z^\prime)$,
the $k$-rationality problem of $k(x,y,z)$ is reduced to
that of $k(x^\prime, y^\prime, z^\prime)$
for the polynomial $P_1(x)$ of even degree.

Since the root of $P_1(x)$ are $0$ and $\{\frac{1}{c_i}\}_{1 \leq i \le r}$,
where $\{c_i\}$ are the roots of $P(x)$,
the conditions (1) $\sim$ (5) are satisfied for $P_1(x)$ also.
Here we assume that $0$ is not a root of $P(x)$.
(If $P(0)=0$, then $P(x)$ is divided by $x$,
so $P(x)$ is not irreducible, thus $k(x,y,z)$ is not $k$-rational
by the discussion in \S\ref{Manin}.)

\subsection{Another biregularization of $T$.} \label{biregT}

As discussed in the subsection \ref{biregularization},
the birational transformation $T$ of $\mathbb{P}^1 \times \mathbb{P}^1$
defined by $x \mapsto x, u \mapsto \frac{P(x)}{u}$
becomes biregular after $2r$ blowings-up in total,
once at each $P_i: x=c_i, u=0 (1 \leq i \leq r)$
and $r$-times at $P_{r+1}: x=u=\infty$.

In this subsection, after reaching $X_r$ in the subsection \ref{biregularization},
we shall proceed in another way.
Blow up $X_r$ at the point  $P_{r+1}$ (this is $X_{r+1}$),
and blow down it by $x=\infty$.
We denote the obtained surface with $Y_1$.
(For simplicity, we shall omit $\widetilde{}$ and $\overline{\ }$
for blowing-up and down.
The confusion is avoided by seeing $C$ is a divisor of which surface).
In $Div(Y_1)$,
$(x=\infty)$ disappears and is replaced by $E_{r+1}$.
The only fundamental point of $T$ is $P_{r+2}$,
and the only exceptional curve is $E_{r+1}$.

Blow up $Y_1$ at $P_{r+2}$ and blow down by $E_{r+1}$.
In the obtained surface $Y_2$,
$E_{r+1}$ disappears and is replaced by $E_{r+2}$.
The only fundamental point of $T$ is $P_{r+3}$,
and the only exceptional curve is $E_{r+2}$.

Repeat this process $\frac{r}{2}$ times.
On the surface $Y_{\frac{r}{2}}$,
$T$ becomes biregular,
and maps $E_{\frac{3r}{2}}$ biregularly to
$E_{\frac{3r}{2}}$,
as studied in the subsection \ref{biregularization}.
We shall denote the obtained $Y_{\frac{r}{2}}$ by $Y$.

The rank of $Pic(Y)$ is $r+2$ with the basis $E_i (1 \leq i \leq r)$
and $E_{\frac{3r}{2}}, (u=\infty)$.

Let $\rho^\ast$ be the $\mathbb{Z}$-linear map from $Div(\mathbb{P}^1 \times \mathbb{P}^1)$
to $Div(Y)$,
obtained by the iteration of $\pi^\ast$ and $\rho^\ast$ mentioned at
the end of the subsection \ref{intersection} and at the subsection \ref{updown}.
Then for an irreducible curve $C$ on $\mathbb{P}^1 \times \mathbb{P}^1$
other than $(x=\infty)$,
we have
\begin{equation}
\rho^\ast(C)=C+\sum_{i=1}^r(C \cdot E_i)E_i+
\sum_{i=1}^{r/2} (C \cdot E_{r+i}) E_{\frac{3r}{2}},
\end{equation}
where $(C \cdot E_{r+i})$ is the intersection form  in $X_{r+i}$.
Especially, $\rho^\ast(u=\infty)=(u=\infty)+\frac{r}{2}E_{\frac{3r}{2}}$.
On the other hand,
$\rho^\ast(x=\infty)=E_{\frac{3r}{2}}$,
as stated in the subsection \ref{updown}.

Let $C$ be an irreducible curve corresponding to
an irreducible polynomial of degree $n$
with respect to $x$ and $m$ with respect to $u$.

Since $C \equiv n(x=\infty)+m(u=\infty)$
in $\mathbb{P}^1 \times \mathbb{P}^1$,
we have in $Y$
\begin{eqnarray} \label{Cequiv}
C \equiv m(u=\infty)-\sum_{i=1}^r (C \cdot E_i)E_i+\nu E_{\frac{3r}{2}} \\
\text{where } \nu=n+\frac{mr}{2}-\sum_{i=1}^{r/2}(C \cdot E_{r+i}) \nonumber
\end{eqnarray}
Especially
\begin{eqnarray}
(x=c_i) \equiv -E_i+E_{\frac{3r}{2}} \\
(u=0) \equiv (u=\infty)-\sum_{i=1}^r E_i+\frac{r}{2}E_{\frac{3r}{2}} \nonumber
\end{eqnarray}

Thus, the action of $\mathfrak{G}$ on $Pic(Y)$ is represented by
the following matrices with $E_1, E_2, \dots, E_r, (u=\infty), E_{\frac{3r}{2}}$
as the basis in this order
\begin{eqnarray}
\text{For } \sigma \in N=Gal(l/k(\sqrt{a})), &
g_\sigma=
\begin{pmatrix}
A_\sigma & 0 \\
0 & I_{r+2} \\
\end{pmatrix} \\
\text{For } \sigma \in \mathfrak{G} \setminus N, &
g_\sigma=
\begin{pmatrix}
-A_\sigma & -1 & 0 \\
0 & 1 & 0 \\
1 & r/2 & 1
\end{pmatrix} \nonumber
\end{eqnarray}
This is identical with (\ref{even1}),
and we see that $Pic(Y)$ is just the direct summand $M_1$ of $Pic(X)$.

We shall determine $\mathfrak{G}$-invariant divisor classes:
$Pic(Y)^\mathfrak{G}=\bigcap_{\sigma} ker(g_\sigma-id)$.
Since $\sum_{\sigma} g_\sigma$ is zero except
the last row and the $(r+1)$-th column,
$Pic(Y)^\mathfrak{G}$ is of rank 2
with the basis $E_{\frac{3r}{2}}$ and
$(u=0)+(u=\infty)=2(u=\infty)-\sum_{i=1}^r E_i+\frac{r}{2}E_{\frac{3r}{2}}$.

On the other hand, the canonical divisor is calculated by
$\Omega=\rho^\ast\big(-2(x=\infty)-2(u=\infty)\big)
+\sum_{i=1}^r E_i+\frac{r}{2}E_{\frac{3r}{2}}$,
and we have
\begin{equation} \label{Omega}
\Omega=-2(u=\infty)+
\sum_{i=1}^r E_i-(\frac{r}{2}+2)E_{\frac{3r}{2}}
\end{equation}
Therefore, the basis of $Pic(Y)^\mathfrak{G}$ is chosen as $E_{\frac{3r}{2}}$
and $\Omega$.
Namely, a $\mathfrak{G}$-invariant divisor class becomes a
$\mathbb{Z}$-linear combination of $E_{\frac{3r}{2}}$ and $\Omega$,
with $E_{\frac{3r}{2}} \cdot E_{\frac{3r}{2}}=0, \ 
\Omega \cdot E_{\frac{3r}{2}}=-2, \ 
\Omega \cdot \Omega=8-r$.

A $\mathfrak{G}$-invariant irreducible curve $C$
belongs to a $\mathfrak{G}$-invariant divisor class,
so comparing (\ref{Cequiv}) with (\ref{Omega}),
we have $m=2(C \cdot E_i)$ for $1 \leq i \leq r$.
Thus $m$ should be even,
and $C \cdot E_i$ should be $\frac{m}{2}$ for any $i$.

Hereafter, we shall denote with $2m$ (instead of $m$)
the degree of a $\mathfrak{G}$-invariant curve with respect to $u$.
Then, \ref{Cequiv} is rewritten as
\begin{eqnarray} \label{Cequiv2}
C \equiv -m\Omega+\nu E_{\frac{3r}{2}} \\
\text{where } \nu=n+m(\frac{r}{2}-2)-\sum_{i=1}^{r/2}(C \cdot E_{r+i})
\nonumber
\end{eqnarray}
Note that the coefficient of $\Omega$ is always $-m$,
while the coefficient of $E_{\frac{3r}{2}}$ depends on the local
behavior of $C$ around the point $x=u=\infty$.

\subsection{Proof of Theorem \ref{B}} \label{pfB}

If $k(x,y,z)$ is $k$-rational,
we have $k(x,y,z)=k(t,s)$ for some $t$ and $s$.
Then $\overline{k}(x,u)=\overline{k}(t,s)$,
so that $Y$ is $k$-birational with $\mathbb{P}^1 \times \mathbb{P}^1$.
Here ``$k$-birational'' means that
there exists a birational mapping
$\mathbb{P}^1 \times \mathbb{P}^1 \rightarrow Y$
which commutes with the action of $G=Gal(k_{sep}/k)$,
where $G$ acts trivially on $t$ and $s$.

Let $\Phi$ be a $k$-birational mapping $\mathbb{P}^1 \times \mathbb{P}^1
\rightarrow Y$.
After finite steps of blowings-up of $\mathbb{P}^1 \times \mathbb{P}^1$ and $Y$
respectively, $\Phi$ is lifted to a biregular mapping
$Z \rightarrow Z^\prime$.

For $a,b \in k$,
the lines $t=a$ and $s=b$ are $\mathfrak{G}$-invariant in
$\mathbb{P}^1 \times \mathbb{P}^1$,
so that their images are also $\mathfrak{G}$-invariant in $Y$ or in $Z^\prime$.
Suppose that $t=a$ is not an exceptional curve of $\Phi$
and does not pass through fundamental point of $\Phi$,
then the values of intersection form
$(t=a) \cdot (t=a)=0, (t=a) \cdot \Omega=-2$
are kept invariant under the blowings-up,
so the image $C$ in $Z^\prime$ also satisfies
$C \cdot C=0$ and $C \cdot \Omega=-2$ in $Z^\prime$.
\hfill\break
$Z^\prime$ is obtained from $Y$ by successive blowings-up
$\{E_j^\prime\}$.
By each blowing-up, $C \cdot C$ is decreased by $(C \cdot E_j^\prime)^2$
and $\Omega \cdot C$ is increased by $C \cdot E_j^\prime$,
by (\ref{2_1}) and (\ref{widetildeC}).

Thus we have
\begin{eqnarray} \label{CC}
C \cdot C=\sum_j m_j^2 \\
\Omega \cdot C=-2-\sum_j m_j \text{ on } Y \nonumber \\
\text{where } m_j=C \cdot E_j^2 \nonumber
\end{eqnarray}
From (\ref{Cequiv2}) we have
\begin{eqnarray}
C \cdot C &=& 4m\nu+\omega m^2 \\
\Omega \cdot C &=& -2\nu-m\omega
\text{ where } \omega=\Omega \cdot \Omega \nonumber
\end{eqnarray}
Combining with (\ref{CC}), we get
\begin{eqnarray} \label{meq}
\sum_j m_j^2=4m\nu+\omega m^2 \\
\sum_j m_j=2\nu+m\omega-2 \nonumber
\end{eqnarray}
As shown later,
we have $0 \leq m_j \leq 2m$ for any $j$.
Thus $0 \leq \sum_j m_j^2 \leq 2m\sum_j m_j$,
so that
\begin{equation}
4m\nu+\omega m^2 \leq 2m(2\nu+m\omega-2)
\end{equation}
which yields $\omega m^2 \geq 4m$.
This is impossible if $\omega \leq 0$ and $m \not= 0$.
Since $\omega=8-r$,
if $r \geq 8$,
then any $\mathfrak{G}$-invariant curve other than $x=$const.
cannot become the image of $t=a$.

The same holds for $s=b$.
Since $t$ and $s$ are algebraically independent,
at least one of $t$ and $s$ depends on $u$
so that $m \geq 1$.
This contradicts with the result above.
\begin{rem}
In the proof above,
the $k$-birationality of $\Phi$ is not used.
So we see that by any birational mapping
$\mathbb{P}^1 \times \mathbb{P}^1 \rightarrow Y$,
$t=a$ can not be mapped to a $\mathfrak{G}$-invariant curve
other than $x=$const., whenever $r \geq 8$.
\end{rem}

We shall prove $m_j \leq 2m$ below.

Let $E^\prime$ be the blowing-up of a ponit $P^\prime$ on $Y$.
$P^\prime$ lies on the line $x=c$ for some $c$ or on the line $E_{\frac{3r}{2}}$.
We have $C \cdot (x=c)=2m$ for $c \not= c_i$,
and $C \cdot (x=c_i)=C \cdot E_i=m$.
As for $E_{\frac{3r}{2}}$,
since $(x=c) \equiv E_{\frac{3r}{2}}$ for $c \not= c_i$,
we have $C \cdot E_{\frac{3r}{2}}=2m$.

Since $C_1 \cdot C_2 \geq 0$ for mutually different irreducible curves $C_1, C_2$,
after the blowings-up we have
$ \widetilde C \cdot (\widetilde{x=c})=C \cdot (x=c)-
(\widetilde C \cdot E^\prime)
\big( (\widetilde{x=c}) \cdot E^\prime \big) \geq 0$.
Since $(\widetilde{x=c}) \cdot E^\prime=1$,
we get
\begin{equation} \label{leq}
\widetilde{C} \cdot E^\prime \leq C \cdot (x=c) \leq 2m.
\end{equation}
Successive blowing-up may occur at a point on the once blown-up $E^\prime$.
Let $E^{\prime\prime}$ be the blowing-up of a point $P^{\prime\prime}$
on $E^\prime$.
Then, by the same reason as above,
we have $\widetilde C \cdot E^{\prime\prime} \leq C \cdot E^\prime$.

Therefore, for successive blowings-up $\{E_j^\prime\}$,
(namely $E_1^\prime$ is the blowing-up at $P_1 \in Y$,
$E_2^\prime$ is the blowing-up at $P_2 \in E_1^\prime$,
$E_3^\prime$ is the blowing-up at $P_3 \in E_2^\prime$,
and so on)
$C \cdot E_j^\prime$ is monotone decreasing.
Since the first blowing-up $E^\prime$ satisfies (\ref{leq}),
any further successive blowing-up also satisfies
$C \cdot E_j^\prime \leq 2m$.

\section{Further reduction}

\subsection{Del Pezszo surface.}

A non-singular projective surface $X$ is called a del Pezzo surface
if it is rational (namely, birational with $\mathbb{P}^2$ or
$\mathbb{P}^1 \times \mathbb{P}^1$ over $\overline{k}$)
and the anti-canonical divisor is ample.
The latter condition means that $\Omega \cdot \Omega >0$
and $\Omega \cdot \Gamma <0$ for every irreducible curve $\Gamma$ on $X$.

The following is a fundamental theorem for a del Pezzo surface.

\begin{thm}
A del Pezzo surface is biregular with some successive blowings-up of $\mathbb{P}^2$.
\end{thm}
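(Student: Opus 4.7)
The plan is to proceed by induction on the Picard rank $\rho(X)$ (equivalently, by descending induction on $\Omega \cdot \Omega$), using the blowing-down machinery of \S4.2 to contract $(-1)$-curves one at a time until one lands on a minimal rational surface, which the del Pezzo hypothesis will then force to be $\mathbb{P}^2$.

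For the base of the induction I would take $X$ to be \emph{minimal}, meaning $X$ contains no irreducible curve $F$ with $F \cdot F=-1$ and $F \cdot \Omega=-1$, so that no further blow-down as in \S4.2 is possible. By the classical Enriques--Castelnuovo classification of minimal rational surfaces, such an $X$ is biregular to $\mathbb{P}^2$, to $\mathbb{P}^1 \times \mathbb{P}^1$, or to a Hirzebruch surface $\mathbb{F}_n$ with $n \geq 2$. The del Pezzo condition rules out $\mathbb{F}_n$ for $n \geq 2$: such a surface carries a section $\sigma$ with $\sigma \cdot \sigma=-n$, and adjunction forces $\Omega \cdot \sigma = -2-\sigma \cdot \sigma = n-2 \geq 0$, which contradicts $-\Omega \cdot \sigma > 0$. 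The remaining case $\mathbb{P}^1 \times \mathbb{P}^1$ is handled by the standard elementary transformation, which exhibits $\mathbb{P}^1 \times \mathbb{P}^1$ blown up at a point as biregular with $\mathbb{P}^2$ blown up at two points.

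For the inductive step, assume $X$ is non-minimal and pick a $(-1)$-curve $F$. The construction of \S4.2 produces the blow-down $\overline{X}$ with $\rho(\overline{X}) = \rho(X)-1$ and $\Omega_{\overline{X}} \cdot \Omega_{\overline{X}} = \Omega_X \cdot \Omega_X + 1$, so the top self-intersection stays positive. To continue the induction I must verify that $-\Omega_{\overline{X}}$ is still ample, which by Nakai--Moishezon amounts to checking $\overline{C} \cdot \Omega_{\overline{X}} < 0$ for every irreducible curve $\overline{C}$ on $\overline{X}$. The formula $\overline{C} \cdot \Omega_{\overline{X}} = C \cdot \Omega_X - C \cdot F$ from \S4.2 delivers this whenever $C \cdot F \leq -\Omega_X \cdot C - 1$, which holds for most choices of $F$ and $C$.

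The main obstacle is precisely this preservation of the del Pezzo property under blow-down: the inequality above can fail for a careless choice of $F$, since an incident $(-1)$-curve $C$ with $C \cdot F$ unusually large produces $\overline{C}$ violating ampleness on $\overline{X}$. The remedy is to use the finiteness of the set of $(-1)$-curves on a del Pezzo surface (which follows from the bound $-\Omega \cdot C \geq 1$ combined with the Hodge index theorem) and to select $F$ from this finite set so that no such obstructing $C$ occurs; an $F$ with $F \cdot F' \leq 1$ for every other $(-1)$-curve $F'$ always exists on a non-minimal del Pezzo surface. This combinatorial choice is the technical core of the argument and is the step requiring the most care.
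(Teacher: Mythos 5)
The paper does not prove this theorem at all: it is stated with an explicit pointer to Manin \cite{Man86} \S 24 and Nagata \cite{Nag60a,Nag60b}, so your proposal can only be measured against the standard argument, whose architecture (contract $(-1)$-curves until the surface is minimal, then identify the minimal model) you have correctly reproduced. However, the step you single out as ``the technical core'' rests on a sign error, and the remedy you build around it is both unnecessary and insufficient. From the paper's formula $\overline{C} \cdot \Omega_{\overline{X}} = C \cdot \Omega_X - C \cdot F$, what you need is $C \cdot \Omega_X - C \cdot F < 0$, i.e.\ $C \cdot F > C \cdot \Omega_X$. On a del Pezzo surface $C \cdot \Omega_X \leq -1$ for every irreducible $C$, while $C \cdot F \geq 0$ for every irreducible $C \neq F$; the inequality therefore holds automatically, for \emph{every} $(-1)$-curve $F$ and every $C$. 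Your condition ``$C \cdot F \leq -\Omega_X \cdot C - 1$'' points the wrong way, and the entire final paragraph of your proposal addresses an obstruction that does not exist. (Even granting your condition, the proposed choice of $F$ with $F \cdot F' \leq 1$ for all other $(-1)$-curves would not verify it: for a $(-1)$-curve $C$ your condition demands $C \cdot F \leq 0$, and it constrains nothing for curves that are not $(-1)$-curves. So as written the inductive step is supported by an unproved and inadequate combinatorial lemma, i.e.\ it is a gap -- albeit one repaired simply by redoing the computation, which shows that blowing down any $(-1)$-curve on a del Pezzo surface yields a del Pezzo surface, using $\Omega_{\overline{X}} \cdot \Omega_{\overline{X}} = \Omega_X \cdot \Omega_X + 1 > 0$ for the other half of Nakai--Moishezon.)

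The genuinely delicate points are the ones you pass over quickly. First, the theorem as stated is false for $X = \mathbb{P}^1 \times \mathbb{P}^1$, which is a del Pezzo surface that is not biregular to any blow-up of $\mathbb{P}^2$; the correct statement must except it. Your elementary-transformation device repairs this only when the descent from $X$ to $\mathbb{P}^1 \times \mathbb{P}^1$ involves at least one contraction, so you should either add the exception to the statement or observe that it is harmless for the paper's application, where $\Omega \cdot \Omega = 8 - r \leq 4$ forces $Y_C$ to have Picard rank at least $6$. Second, your base case imports the classification of minimal rational surfaces (and the contraction step imports Castelnuovo's contractibility criterion, which underlies the blow-down theorem of \S 4.2); your exclusion of $\mathbb{F}_n$ for $n \geq 2$ via $\Omega \cdot \sigma = n - 2 \geq 0$ is correct, but these two classical inputs are precisely the content for which the paper defers to Manin and Nagata, so a self-contained proof would have to supply them.
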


In general, if $X$ is rational,
then some blowings-up of $X$ is biregular with some blowings-up of
$\mathbb{P}^1 \times \mathbb{P}^1$ or $\mathbb{P}^2$.
See the subsection \ref{birational}.
However, for a del Pezzao surface,
a blowing-up of $X$ is not required,
and only $\mathbb{P}^2$ should be blown-up.

For the proof of the theorem,
see for instance, Manin \cite{Man86} \S24
or Nagata \cite{Nag60a,Nag60b}.

Conversely, $n$-point blowing-up of $\mathbb{P}^2$ is a del Pezzao surface
if $n \leq 8$ and any three points do not lie on a same line
(for $n \geq 3$) and any six points do not lie
on a same quadratic curve (for $n \geq 6$).

\subsection{Iteration of blowings-up and down.}

Let $X$ be a non-singular porjective surface,
and $P_1, P_2, \dots, P_r$ be points on $X$.
The successive blowings-up at $\{P_i\}_{1 \leq i \leq r}$
does not depend on the order of the blowings-up.
(More precisely, the obtained surface by the blowings-up
in different orders are mutually biregular).

For successive blowings-up on the once blowing-up $E$,
(namely $E_1$ is the blowing-up at $P_1 \in X$,
$E_2$ is the blowing-up at $P_2 \in E_1$,
$E_3$ is the blowing-pu at $P_3 \in E_2$,
and so on)
the order of the blowing-up can not be changed.
In this case $C \cdot E_i$ is monotone decreasing,
as remarked at the end of the subsection \ref{pfB}.

Let $X_1$ be the blowing-up of $X$ at a point $P_1$,
and $Y_1$ be the blowing-down by some $\widetilde{F}_1$,
where $F_1$ is an irreducible curve on $X$
passing through $P_1$
such that $F_1 \cdot F_1=0$ and $F_1 \cdot \Omega=-2$ on $X$.

The blowing-up of $Y_1$ at some point $Q_1$ of $Y_1$ is
biregular with $X_1$, mapping $E_Q$ to $\widetilde{F}_1$,
by the definition of the blowing-down.

Let $X_2$ be the blowing-up of $X_1$ at $P_2 \in X_1 \setminus \widetilde{F}_1$.
Since $X_1 \setminus \widetilde{F}_1$ is biregular
with $Y_1 \setminus \{Q_1\}$,
this induces a blowing-up of $Y_1$ at the corresponding point $P_2^\prime$.
Blow-down again by some $\widetilde{F}_2$,
and let $Y_2$ be the obtained surface.

The blowing-up of $Y_2$ at some point $Q_2$ is biregular
with the blowing-up of $Y_1$ at $P_2^\prime$.
Since $X_2$ is biregular with the successive blowings-up of $Y_1$
at $Q_1$ and $P_2^\prime$,
we see that $X_2$ is biregular with the successive blowings-up of $Y_2$
at $Q_2$ and $Q_1$.

Repeat this $r$-times.
Let $X_r$ be the surface obtained from $X$ by the successive blowings-up at $\{P_i\}$.
After each blowing-up, take a suitable blwoing-down,
and after repeating this $r$-times,
let $Y_r$ be the obtained surface.
Then $X_r$ is biregular with the successive blowings-up of $Y_r$ at $\{Q_i\}$.
Here we assume that $P_i$ does not lie on $F_j$ for $j<i$ in $X_{i-1}$.
(For simplicity, we omit $\widetilde{}$ and $\overline{\ }$
for blowing-up and down).

\section{The case $\deg P=4$ or $6$.}

\subsection{The surface $Y_C$} \label{Y_C}

We shall continue the discussion in \S\ref{chB}.

When $\deg P=4$ or $6$,
we assume the existence of a $\mathfrak{G}$-invariant irreducible curve $C$
on $Y$ other than $x=$const.
such that $C \cdot C=0$ and $\Omega \cdot C=-2$
after some blowings-up of $Y$,
and show a contradiction occurs.

Let $\{E_j^\prime\}$ be necessary blowings-up,
then $m_j=C \cdot E_j^\prime$
must satisfy (\ref{meq})
in the subsection \ref{pfB},
where $m$ and $\nu$ are given by (\ref{Cequiv2}) in the subsection \ref{biregT}

We have $0 \leq m_j \leq 2m$ for any $j$.
We can suppose that for the first $n^\prime$ blowings-up,
we have $m_j>m$,
while for the remaining blowings-up, we have $m_j \leq m$.
This is possible since the order of the blowings-up can be chosen arbitrary
when $\{P_j^\prime\}$ are points of the same surface,
and $\{m_j\}$ is monotone decreasing when $P_j^\prime$
is a point on the once blown-up $E_{j^\prime}^\prime (j^\prime<j)$.

Let $P_1^\prime$ be a point on $Y$ such that $m_1 > m$.
$P_1^\prime$ lies on $x=c$ for some $c$ or on $E_{\frac{3r}{2}}$.
Note that $c \not= c_i$,
since $P_i$ lies on $x=c_i$
and $C \cdot E_i=m$,
so that for any point $P$ on $x=c_i$,
$C \cdot E_p >m$ can not occur.

So $(x=c) \equiv E_{\frac{3r}{2}}$ in $Pic(Y)$,
thus we have $(x=c) \cdot (x=c)=0$,
$(x=c) \cdot \Omega=-2$
and $C \cdot (x=c)=2m$.

Let $Y_1$ be the blowing-up of $Y$ at $P_1^\prime$,
and $Y_1^\prime$ be the blowing-down of $Y_1$
by $x=c$ (or $E_{\frac{3r}{2}}$).
$Y_1$ is biregular with the blowings-up of $Y_1^\prime$
at some $Q_1$, where
\begin{equation}
C \cdot E_{Q_1} =2m-C \cdot E_1^\prime=2m-m_1
\end{equation}
(To avoid the confusion,
we shall use $\widetilde{}$ and $\overline{\ }$.
Then
$\widetilde{\overline{\widetilde C}} \cdot E_{Q_1}=
\widetilde C \cdot (\widetilde{x=c})=
C \cdot (x=c)-\widetilde{C} \cdot E_1^\prime=2m-m_1$)

Let $P_2^\prime$ be a point on $Y_1$ such that $m_2 >m$.
$P_2^\prime$ does not lie on $(\widetilde{x=c})$,
because $\widetilde C \cdot (\widetilde{x=c})=2m-m_1<m$.
so that any point $P$ on $(\widetilde{x=c})$
can not satisfy $\widetilde{C} \cdot E_P >m$.

Since $Y_1 \setminus (\widetilde{x=c})$ is biregular
with $Y_1^\prime \setminus \{Q_1\}$,
the blowing-up at $P_2^\prime$ induces the blowing-up of $Y_1^\prime$
at the correspoinding point $P_2^{\prime\prime}$.
After the blowing-up at $P_2^{\prime\prime}$,
blow-down by $x=c^\prime, E_{\frac{3r}{2}}$ or $E_1^\prime$
passing through $P_2^{\prime\prime}$.
Let $Y_2^\prime$ be the obtained surface.
Then the blowing-up of $Y_1$ at $P_2^\prime$ is biregular with the successive blowings-up
of $Y_2^\prime$ at $Q_2$ and $Q_1$,
and we have $C \cdot E_{Q_2}=2m-m_2$.

Repeat this procedure $n^\prime$ times.
We shall denote the obtained surface $Y_{n^\prime}^\prime$
with $Y_C$.
($Y_C$ depends on $C$).
Then the successive blowings-up of $Y$ at $\{P_j^\prime\}$
is obtained by the successive blowings-up of $Y_C$,
but this time every blowing-up
satisfies $m_j^\prime := C \cdot E_{Q_j} \leq m$.

\subsection{Proof of Theorem \ref{C}}

We shall prove that $Y_C$ is a del Pezzo surface.

Let $E_0$ be $E_{\frac{3r}{2}}$ or its replacement
when $E_{\frac{3r}{2}}$ is eliminat4ed by some blowing down.
The relation (\ref{Cequiv2}) in $Pic(Y)^\mathfrak{G}$ yields the following
relation in $Pic(Y_C)^\mathfrak{G}$.
\begin{eqnarray}
C \equiv -m\Omega +\nu^\prime E_0 \\
\text{where } \nu^\prime=\nu+\sum_{j=1}^{n^\prime}(m-m_j) \nonumber
\end{eqnarray}
The relation (\ref{meq}) yields
\begin{eqnarray}
\sum_j m_j^{\prime2}=4m\nu^\prime+\omega m^2 \nonumber\\
\sum_j m_j^\prime=2\nu^\prime+m\omega-2 \\
\text{where } m_j^\prime=Min(m_j,2m-m_j) \nonumber
\end{eqnarray}
This time,
since $0 \leq m_j^\prime \leq m$,
we must have $0 \leq \sum_j m_j^{\prime2} \leq m \sum_j m_j^\prime$
so that
\begin{equation}
4m\nu^\prime+\omega m^2 \leq m(2\nu^\prime+m\omega-2)
\end{equation}
which is equivalent with $\nu^\prime \leq -1$.

Since $\omega:=\Omega \cdot \Omega=8-r>0$,
in order to prove that $Y_C$ is a del Pezzo surface,
it suffices to show $\Omega \cdot \Gamma<0$
for any irreducible curve $\Gamma$.

For $\Gamma=C$,
we have $\Omega \cdot C=-2-\sum_j m_j^\prime \leq -2$.
For $\Gamma=E_i$,
we have $\Omega \cdot E_i=-1$.
For $\Gamma=E_0$,
we have $\Omega \cdot E_0=-2$.
For $\Gamma$ other than mentioned above,
since $\Gamma \cdot C \geq 0$, we have
$\Gamma \cdot (-m\Omega+\nu^\prime E_0) \geq 0$,
so that $m\Omega \cdot \Gamma \leq \nu^\prime E_0 \cdot \Gamma$.
Since $\nu^\prime \leq -1$ and $E_0 \cdot \Gamma \geq 0$,
we have $\Omega \cdot \Gamma \leq 0$,
and $\Omega \cdot \Gamma$ can be 0
only if $E_0 \cdot \Gamma=0$.

Suppose that $E_0 \cdot \Gamma=0$.
Since $E_0$ is $\mathfrak{G}$-invariant,
for any image $\Gamma^\sigma$ by the action of $\sigma \in \mathfrak{G}$,
we have $E_0 \cdot \Gamma^\sigma=0$,
so that $E_0 \cdot \sum_{\sigma} \Gamma^\sigma=0$.
$\sum_{\sigma} \Gamma^\sigma$ is
$\mathfrak{G}$-invariant,
so can be written as $m_\Gamma \Omega+\nu_\Gamma E_0$
in $Pic(Y_C)^\mathfrak{G}$.
Since $E_0 \cdot E_0=0$ and $\Omega \cdot E_0=-2$,
we have $m_\Gamma=0$.
This implies that $\Gamma$ does not depend on $u$,
so $\Gamma=(x=c)$ for some $c$.

However, $(x=c) \equiv E_0$ for $c \not= c_i$,
and $(x=c_i)  \equiv -E_i+E_0$,
so we have $\Omega \cdot (x=c)=\Omega \cdot E_0=-2$
and $\Omega \cdot (x=c_i)=\Omega \cdot E_0-\Omega \cdot E_i
=-2-(-1)=-1$.

Now, the proof is completed,
and $Y_C$ is certainly a del Pezzo suface.

\begin{rem}
We have $C \cdot C=4m\nu^\prime+\omega m^2 >0$,
hence $-\frac{\omega}{4} m < \nu^\prime \leq -1$.

The possibility of $C \cdot C=0$
is discarded by the following reason.
Suppose that $C$ corresponds to $t=a$ and $C^\prime$ corresponds to $s=b$,
then $C \cdot C^\prime=1$ on $Z^\prime$.
$C \cdot C=0$ on $Y_C$ implies $m_j^\prime=0$
for any $j$,
so that $C \cdot C^\prime=1$ on $Y_C$,
which contradicts to $E_0 \cdot E_0=0$
and $E_0 \cdot \Omega=-2$.
\end{rem}

\subsection{Desired contradiction.}

Since $Y_C$ is a del Pezzo surface,
it is biregular with a five point (when $r=4$)
or seven point (when $r=6$) blowing-up $B$ of $\mathbb{P}^2$.
Suppose that $B$ is obtained from $\mathbb{P}^2$
by the blowings-up $F_i$ at $Q_i \in \mathbb{P}^2$.
Any three point of $Q_i$ do not lie on a same line,
and when $r=6$, any six point of $Q_i$
do not lie on a same quadratic curve.

Let $l$ be a line on $\mathbb{P}^2$
which does not pass through any $Q_i$.
Then $Pic(B)$ is a free $\mathbb{Z}$-module
with $F_i (1 \leq i \leq r+1)$ and $l$ as the basis.
The intersection form in $Pic(B)$ is written as
$\begin{pmatrix} -I_{r+1} & 0 \\ 0 & 1 \end{pmatrix}$
with the above basis in this order.
The canonical divisor is
$\Omega=-3l+\sum_{i=1}^{r+1}F_i$.

The biregular mapping of $Y_C$ and $B$ induces an isomorphism
of $Pic(Y_C)$ and $Pic(B)$,
keeping the intersection forms invariant.

On $Y_C$, the family of lines $x=c$ belongs to the divisor class $E_0$,
which satisfies $E_0 \cdot E_0=0$
and $E_0 \cdot \Omega=-2$.
This family should be mapped to a family of curves on $B$
whose divisor class $\Gamma$ satisfies
$\Gamma \cdot \Gamma=0$ and $\Omega \cdot \Gamma=-2$.

Now, we shall consider the case of $r=4$.
$B$ is a five point blowing-up of $\mathbb{P}^2$.
There are ten families of curves which satisfy
$\Gamma \cdot \Gamma=0$ and $\Omega \cdot \Gamma=-2$.
They are the families of lines passing through some $Q_i$,
say $Q_1$ whose divisor class is $l-F_1$,
and the families of quadratic curves passing through four points of $Q_i$,
whose divisor class is $2l-F_1-F_2-F_3-F_4$.

The family $x=c$ should be mapped to one of these ten families.
Let $\Pi$ be its divisor class.
Which class $\Pi$ may be,
we observe that there are another family whose divisor calss $\Pi_1$ satisfies
$\Pi+\Pi_1=-\Omega$.
This means that in $Pic(Y_C)$,
the divisor class $-\Omega-E_0$ is the class of some 
family of irreducible curves.
These curves are mutually disjoint and the union covers all $Y_C$,
since the curves of $\Pi_1$ behave so on $B$.

Let $E_0^\prime=-\Omega-E_0$
and change the basis of $Pic(Y_C)^\mathfrak{G}$
to $E_0^\prime$ and $\Omega$.
Then $C \equiv -\nu^\prime E_0^\prime-(m+\nu^\prime)\Omega$.

Note that $-m < \nu^\prime \leq -1$
implies $-\nu^\prime>0$
and $0 < m+\nu^\prime<m$.
After some blowings-up of $Y_C$
we reach to $Z^\prime$,
and some $m_j^\prime$ may be larger than $m+\nu^\prime$.
For such blowings-up, we blow-down by the curve belonging to $E_0^\prime$,
and obtain the surface $Y_C^\prime$.
(The procedure in the subsection \ref{Y_C},
using $E_0^\prime$ instead of $E_0$).

$Y_C^\prime$ is another del Pezzao surface and we have
$C \equiv \nu^{\prime\prime}E_0^\prime-(m+\nu^\prime)\Omega$
in $Pic(Y_C^\prime)$ for some
$-(m+\nu^\prime) < \nu^{\prime\prime} \leq -1$.
Again $Y_C^\prime$ is biregular with
some five point blowing-up of $\mathbb{
P}^2$,
and $-\Omega-E_0^\prime$ is the divisor class of a family
of irreducible curves on $Y_C^\prime$.

Repeat this procedure,
then after finite steps, the condition $m+\nu^\prime+\nu^{\prime\prime}
+\cdots+\nu^{(n)}>0$,
which means $C \cdot C>0$ in $Y_C^{(n)}$,
can not be satisfied.
This is a contradiction and the proof of $k$-irrationality of $k(x,y,z)$ is completed.

Next consider the case of $r=6$.
$B$ is a seven point blow up of $\mathbb{P}^2$,
and the families of curves whose divisor class $\Gamma$ satisfy
$\Gamma \cdot \Gamma=0$ and $\Omega \cdot \Gamma=-2$
are as follows:
\begin{sub}
\item[(1)]
$l-F_1$, etc (7 families)
\item[(2)]
$2l-F_1-F_2-F_3-F_4$, etc (35 families)
\item[(3)]
$3l-2F_1-F_2-F_3-F_4-F_5-F_6$ (42 families)
\item[(4)]
$4l-2F_1-2F_2-2F_3-F_4-F_5-F_6-F_7$ (35 families)
\item[(5)]
$5l-2F_1-2F_2-2F_3-2F_4-2F_5-2F_6-F_7$ (7 families)
\end{sub}

The families of lines $x=c$ on $Y_C$ should be mapped to one of these 126 families on $B$.
Let $\Pi$ be its divisor class.
We observe that,
which class $\Pi$ may be,
there are another family $\Pi_1$
such that $\Pi+\Pi_1=-2\Omega$.
In other words,
$-E_0-2\Omega$ is the divisor class of some family of irreducible curves.

Put $E_0^\prime=-E_0-2\Omega$
and change the basis of $Pic(Y_C)^\mathfrak{G}$
to $E_0^\prime$ and $\Omega$.
Then $C \equiv -\nu^\prime E_0^\prime-(m+2\nu^\prime)\Omega$
in $Pic(Y_C)$.
After the similar blowings-up and down to those
in the proof for $r=4$,
we get $C \equiv \nu^{\prime\prime}E_0^\prime
-(m+2\nu^\prime)\Omega$ in $Y_C^\prime$
for some $\nu^{\prime\prime}$ such that
$-(\frac{m}{2}+\nu^\prime) < \nu^{\prime\prime} \leq -1$.

Repeat this procedure,
and after finiter steps,
the condition $m+2\nu^\prime+2\nu^{\prime\prime}+\cdots
+2\nu^{(n)}>0$,
which means $C \cdot C>0$ in $Y_C^{(n)}$,
can not be satisfied.
This is a contradiction and the $k$-irrationality
of $k(x,y,z)$ has been proved.

\section*{Acknowledgement}
The author would like to thank Professor 
Ming-chang Kang who gave many valuable advices and comments.

\end{document}